\numberwithin{equation}{section}
\numberwithin{definition}{section}
\numberwithin{theorem}{section}
\numberwithin{remark}{section}
\numberwithin{example}{section}
\newtheorem{LEMMA}[theorem]{Lemma}
\newtheorem{CONJECTURE}{Conjecture}
\numberwithin{CONJECTURE}{section}
\def\RR{\mathbb{R}}       
\def\CC{\mathbb{C}}       
\def\Rd{\mathbb{R}^d}     
\def\Rdzero{\mathbb{R}^d\setminus\{0\}}  
\def\vx{\boldsymbol{x}}   
\def\vy{\boldsymbol{y}}   
\def\vw{\boldsymbol{w}}   
\def\vxi{\boldsymbol{\xi}}
\def\NN{\mathbb{N}}       
\def\CC{\mathbb{C}}       
\def\ud{\mathrm{d}}       
\def\Schwartz{\mathcal{S}}        
\def\Schwartzm{\mathcal{S}_{2m}}  
\def\SI{\mathcal{SI}}  
\def\Continue{\mathrm{C}(\mathbb{R}^d)}           
\def\Continuezero{\mathrm{C}(\mathbb{R}^d\setminus\{0\})}     
\def\ContinueInf{\mathrm{C}^{\infty}(\mathbb{R}^d)}        
\def\Lloc{\mathrm{L}_1^{loc}(\mathbb{R}^d)} 
\def\Ltwo{\mathrm{L}_2(\mathbb{R}^d)}       
\def\Lone{\mathrm{L}_1(\mathbb{R}^d)}
\def\Linfty{\mathrm{L}_{\infty}(\mathbb{R}^d)}       
\def\NativePhi{\mathcal{N}_{\Phi}^m(\mathbb{R}^d)}  
\def\NativeG{\mathcal{N}_{G}^m(\mathbb{R}^d)}       
\newcommand{\norm}[1]{\left\lVert#1\right\rVert}  
\newcommand{\abs}[1]{\left\lvert#1\right\rvert}   
\newcommand{\Matlab}{{\sc Matlab}}
\def\pim{\pi_{m-1}(\mathbb{R}^d)}   
\def\vP{\mathbf{P}}            
\def\vQ{\mathbf{Q}}            
\def\vB{\mathbf{B}}
\def\vA{\mathbf{A}}
\def\FvP{\Hat{\mathbf{p}}}           
\def\Hp{\mathrm{H}_{\mathbf{P}}(\mathbb{R}^d)} 
\def\FT{\mathcal{FT}}
\def\Leb{\mathrm{L}}
\def\Hil{\mathcal{H}}
\def\Cont{\mathrm{C}}
\def\Native{\mathcal{N}}
\def\HP{\mathrm{H}_{\mathbf{P}}}
\def\Real{\mathrm{Re}}
\def\RealLloc{\mathrm{Re}(\mathrm{L}_{1}^{loc}(\mathbb{R}^d))}
\def\RealContinue{\mathrm{Re}(\mathrm{C}(\mathbb{R}^d))}   
\def\A{\mathcal{A}}
\def\Hilbert{\mathrm{H}}
\def\genFourG{\hat{\mathsf{g}}_m}
\begin{document}

\title{Reproducing Kernels of Generalized Sobolev
Spaces via a Green Function Approach with Distributional Operators}

\author{Gregory E. Fasshauer \and Qi Ye}

\institute{Gregory E. Fasshauer \at
              Department of Applied Mathematics, Illinois Institute of Technology, Chicago IL 60616 \\
              \email{fasshauer@iit.edu}
           \and
           Qi Ye \at
              Department of Applied Mathematics, Illinois Institute of Technology, Chicago IL 60616 \\
              Tel.: +1-312-567-5867\\
              Fax: +1-312-567-3135\\
              \email{qye3@iit.edu}
}

\date{}

\maketitle


\begin{abstract}
In this paper we introduce a generalized Sobolev space by defining a
semi-inner product formulated in terms of a vector distributional
operator $\vP$ consisting of finitely or countably many
distributional operators $P_n$, which are defined on the dual space
of the Schwartz space. The types of operators we consider
include not only differential operators, but also more general
distributional operators such as pseudo-differential operators. We
deduce that a certain appropriate full-space Green function $G$ with
respect to $L:=\vP^{\ast T}\vP$ now becomes a conditionally positive definite
function. In order to support this claim we ensure that the
distributional adjoint operator $\vP^{\ast}$ of $\vP$ is
well-defined in the distributional sense. Under sufficient
conditions, the native space (reproducing-kernel Hilbert space)
associated with the Green function $G$ can be isometrically embedded into or even
be isometrically equivalent to a generalized Sobolev space. As an application, we
take linear combinations of translates of the Green function with
possibly added polynomial terms and construct a multivariate
minimum-norm interpolant $s_{f,X}$ to data values sampled from an
unknown generalized Sobolev function $f$ at data sites located in
some set $X \subset \RR^d$. We provide several examples, such
as Mat\'ern kernels or Gaussian kernels, that illustrate how many
reproducing-kernel Hilbert spaces of well-known reproducing kernels
are isometrically equivalent to a generalized Sobolev space. These examples further illustrate how
we can rescale the Sobolev spaces by the vector distributional operator $\vP$.
Introducing the notion of scale as part of the definition of a generalized Sobolev space
may help us to choose the ``best'' kernel function for kernel-based approximation methods.

\keywords{kernel approximation \and reproducing kernel Hilbert
spaces \and generalized Sobolev spaces \and Green functions \and
conditionally positive definite functions}
\end{abstract}

\textbf{Mathematics Subject Classification (2000)}: Primary 41A30,
65D05; Secondary 34B27, 41A63, 46E22, 46E35


\section{Introduction}

A large and increasing number of recent books and research papers
apply radial basis functions or other kernel-based approximation
methods to such fields as scattered data approximation, statistical
or machine learning and the numerical solution of partial
differential equations, e.g.,
\cite{BerThAg04,Bou07,BouMeh03,Buh03,Fas07,KBU02ab,SchWen06,SchSmo02,Wah90,Wen05}.
Generally speaking, the fundamental underlying practical problem
common to many of these applications can be represented in the
following way. Given a set of data sites $X \subset \RR^d$ and
associated values $Y \subset \RR$ sampled from an unknown function
$f$, we use translates of a kernel function $\Phi$ and possible
polynomial terms to set up an interpolant $s_{f,X}$ to approximate
the function $f$. When $f$ belongs to the related native space of
$\Phi$, we can obtain error bounds and optimality properties of this
interpolation method. If $\Phi$ is only conditionally positive
definite (instead of the more straightforward positive definite
case), then it is known that the native space can also become a
reproducing-kernel Hilbert space with a reproducing kernel computed
from $\Phi$ along with additional polynomial terms (see
Section~\ref{s:CPD-NS} and \cite{Wen05}). Nevertheless, there still
remain a couple of difficult and challenging questions to be
answered for kernel methods: \emph{What kind of functions belong to
the related native space of a given kernel function, and which
kernel function is the best for us to utilize for a particular
application?} In particular, a better understanding of the native
space in relation to traditional smoothness spaces (such as Sobolev
spaces) is highly desirable. The latter question is partially
addressed by the use of techniques such as cross-validation and
maximum likelihood estimation to obtain optimally scaled kernels for
any particular application (see e.g.,~\cite{Ste99,Wah90}). However,
at the function space level, the question of scale is still in need
of a satisfactory answer. As we will illustrate shortly, the definition
of our generalized Sobolev spaces will include a notion of scale in a
rather natural way.

We
will deal with these questions in a different way than the authors of the survey paper
\cite{SchWen06} did. In this paper, we want to show that the kernel
functions and native spaces (reproducing kernels and
reproducing-kernel Hilbert spaces) can be computed via Green
functions and generalized Sobolev spaces induced by some vector
distributional operators $\vP:=\left(P_1,\cdots,P_n,\cdots\right)^T$
consisting of finitely or countably many distributional operators
$P_n$ (see Definition~\ref{d:DistrAdjoint}). We can further check
that differential operators are
special cases of these distributional operators.

Some well known examples covered by our theory include the Duchon
spaces and Beppo-Levi spaces associated with polyharmonic splines
(see Examples~\ref{e:spline} and~\ref{e:polyharmonic}). Moreover, in
\cite{Sch08} the author expressed a desire to choose the ``best''
scale parameter of a given kernel function for a particular
interpolation problem by looking at scaled versions of the classical
Sobolev space via different scale parameters.
Examples~\ref{ex:sobolev-spline}, \ref{e:Sobolevspline} and~\ref{e:Matern} tell us that we can
balance the role of different derivatives by selecting appropriate scale
parameters when reconstructing the classical Sobolev spaces by starting with
appropriately chosen inner products of for our generalized Sobolev spaces.
Finally, Example~\ref{e:Gaussian} shows that the native
space of the ubiquitous Gaussian function (the reproducing-kernel
Hilbert space of the Gaussian kernel) is isometrically equivalent to a generalized
Sobolev space, which can be applied to support vector machines and
in the study of motion coherence (see e.g.,~\cite{SSM98,YuiGrz88}).

In this article, we use the notation $\Real(\mathcal{E})$ to be
the collection of all real-valued functions of the function space
$\mathcal{E}$. For example, $\RealContinue$ denotes the collection
of all real-valued continuous functions on $\Rd$. $\SI$ is defined
as the collection of \emph{slowly increasing functions} which grow
at most like any particular fixed polynomial, i.e.,
\[
\SI:=\left\{ f:\Rd\rightarrow\CC:
f(\vx)=\mathcal{O}(\norm{\vx}_2^{m})\text{ as
}\norm{\vx_2}\to\infty\text{ for some }m\in\NN_0 \right\}.
\]
(The notation $f=\mathcal{O}(g)$ means that there is a positive
number $M$ such that $\abs{f}\leqslant M\abs{g}$.)
Roughly speaking, our generalized Sobolev space is a generalization
of the classical real-valued $\Leb_2(\Rd)$-based Sobolev space.
The real classical Sobolev space is usually given by
\[
\Hil^n(\Rd) := \left\{ f \in\RealLloc\cap\SI: D^{\alpha}
f\in\Ltwo\text{ for all }\abs{\alpha}\leq n,\alpha\in\NN^d_0\right\}
\]
with inner product
\[
(f,g)_{\Hil^n(\Rd)} := \sum_{\abs{\alpha}\leq n}\int_{\Rd}D^\alpha f(\vx)
\overline{D^\alpha g(\vx)}\ud\vx, \quad{ }f,g\in\Hil^n(\Rd),
\]

Our concept of a real \emph{generalized Sobolev space} (to
be defined in detail in Definition~\ref{d:GenSolSp} below) will be
of a very similar form, namely
\[
\Hp:= \Big\{ f\in\RealLloc\cap\SI:\{P_j
f\}_{j=1}^{\infty}\subseteq\Ltwo\text{ and }
\sum_{j=1}^{\infty}\norm{P_jf}_{\Ltwo}^2<\infty \Big\}
\]
with the semi-inner product
\[
(f,g)_{\Hp}:=\sum_{j=1}^{\infty}\int_{\Rd}P_j f(\vx)\overline{P_j g(\vx)}\ud\vx, \quad{
}f,g\in\Hp.
\]
Why do we use different vector distributional operators to set up the generalized Sobolev space? An important feature driving this definition is the fact that this will give us different semi-norms in which to measure the target function $f$ adding a notion of scale on top of the usual smoothness properties. As we discuss in Example~\ref{ex:sobolev-spline}, a scale parameter will control the semi-norm by affecting the weight of the various derivatives involved. This may guide us in finding the kernel function with ``optimal'' scale parameter to set up a kernel-based approximation for a given set of data values --- an important problem in practice for which no analytical solution exists.

Since the Dirac delta function $\delta_0$ at the origin is just a
tempered distribution belonging to the dual space of the Schwartz
space, the Green function $G$ we introduce in
Definition~\ref{d:GreenFct} needs to be regarded as a tempered
distribution as well. Thus we want to define a distributional
operator $L$ on the dual space of the Schwartz space so that
$LG=\delta_0$. The distributional operator and its distributional
adjoint operator are well-defined in Section~\ref{s:distributions}.
According to Theorem~\ref{t:GF-CPD}, we can prove that an even Green
function $G\in\RealContinue\cap\SI$ is a conditionally positive
definite function of some order $m\in\NN_0$. Therefore, we can
construct the related native space $\NativeG$ of $G$ as a complete
semi-inner product space. The native space can become a reproducing-kernel Hilbert space and its reproducing kernel is set up by the Green function and possible polynomial terms (see Section~\ref{s:CPD-NS}
and~\cite{Wen05}). Moreover, the distributional operator $L$ can be
computed by a vector distributional operator
$\vP:=\left(P_1,\cdots,P_n\right)^T$ and its distributional adjoint
$\vP^{\ast}$, i.e., $L=\vP^{\ast T}\vP=\sum_{j=1}^nP_j^{\ast}P_j$.
Under some sufficient conditions, we will further obtain a result in
Theorem~\ref{t:NS-Hp} that shows that the native space $\NativeG$ is
always a subspace of the generalized Sobolev space $\Hp$ and that
their semi-inner products are the same on $\NativeG$. This implies
that the usual native spaces can be isometrically embedded into our generalized Sobolev
spaces. By Lemma~\ref{l:Hpzero}, we know that
$\Hp\cap\Continue\cap\Ltwo$ is also a subspace of $\NativeG$.
Theorems~\ref{t:NS=Hp} and~\ref{t:NS=Hp0} tell us that $\NativeG$
may even be isometrically equivalent to $\Hp$. However, we provide
Example~\ref{e:Laplacian} to show that $\NativeG$ is not always
equivalent to $\Hp$. In other words, $\NativeG$ is sometimes just a
proper subspace of $\Hp$. We complete the proofs needed for the
theoretical framework in this article by applying generalized Fourier
transform (see Definition~\ref{d:GenFourier} and~\cite{Wen05}) and distributional Fourier transform (see
Definition~\ref{d:DistrFourier} and~\cite{SteWei75}) techniques.


\section{Background and Motivation}\label{s:background}

Given data sites $X=\{\vx_1,\cdots,\vx_N\}\subset\Rd$ (which we also
identify with the centers of our kernel functions below) and sampled
values $Y=\{y_1,\cdots,y_N\}\subset\RR$ of a real-valued continuous
function $f$ on $X$, we wish to approximate this function $f$ by a
linear combination of translates of a reproducing kernel $K$.

To this end we set up the interpolant in the form
\begin{equation}\label{interpolant}
s_{f,X}(\vx):=\sum_{j=1}^{N}c_jK(\vx,\vx_j),
\quad{}\vx\in\Rd,
\end{equation}
and require it to satisfy the additional interpolation conditions
\begin{equation}\label{interpolation_conditions}
s_{f,X}(\vx_j)=y_j,         \quad{  }j=1,\ldots,N.
\end{equation}
If $K$ is a \emph{positive definite}~\cite[Definition~6.24]{Wen05} reproducing kernel then
the above system
(\ref{interpolation_conditions}) is equivalent to a uniquely
solvable linear system
\[
\vA_{K,X}\boldsymbol{c}=\mathbf{Y},
\]
where
$\vA_{K,X}:=\left(K(\vx_j,\vx_k)\right)_{j,k=1}^{N,N}\in
\RR^{N\times N}$, $\boldsymbol{c}:=(c_1,\cdots,c_N)^T$ and
$\mathbf{Y}:=(y_1,\cdots,y_N)^T$.
Here a Hilbert
space $\Hilbert_K(\Rd)$ of functions $f:\Rd\rightarrow\RR$ is
called a \emph{reproducing-kernel Hilbert space} \cite[Definition~10.1]{Wen05} with a
\emph{reproducing kernel} $K:\Rd\times\Rd\rightarrow\RR$ if
\[
(1)~K(\cdot,\vy)\in\Hilbert_K(\Rd)\text{ and }
(2)~f(\vy)=(K(\cdot,\vy),f)_{\Hilbert_K(\Rd)},\text{ for all
}f\in\Hilbert_K(\Rd)\text{ and }\vy\in\Rd.
\]

It is well-known that the interpolant $s_{f,X}$ is the best approximation
of an unknown function $f\in\Hilbert_K(\Rd)$ fitting the sample values $Y$ on
the data sites $X$.

\begin{example}\label{ex:sobolev-spline}
We consider two reproducing kernels for differently scaled versions of the classical Sobolev space $\Hil^2(\RR)$: the kernel
\[
K^s(x,y):=\exp\left(-\frac{\sqrt{3}}{2}\abs{x-y}\right)\sin\left(\frac{1}{2}\abs{x-y}+\frac{\pi}{6}\right),\quad x,y\in\RR,
\]
and the Sobolev spline (Mat\'ern) kernel
\[
K(x,y):=\frac{1}{8\sigma^3}\left(1+\sigma\abs{x-y}\right)\exp\left(-\sigma\abs{x-y}\right),\quad x,y\in\RR,
\]
with scale parameter $\sigma>0$.
It is not difficult to show that these functions are Green functions of the differential operators $L^s:=I-\frac{\ud^2}{\ud x^2} + \frac{\ud^4}{\ud x^4}$ and $L:=(\sigma^2I-\frac{\ud^2}{\ud x^2})^2$, respectively. As a result the inner products for their real reproducing-kernel Hilbert spaces are
\[
(f,g)_{\Hilbert_{K^s}(\RR)}:=\int_{\RR}\left(f''(x)\overline{g''(x)}+f'(x)\overline{g'(x)}+f(x)\overline{g(x)}\right)\ud x,\quad f,g\in\Hil^2(\RR),
\]
and
\[
(f,g)_{\Hilbert_{K}(\RR)}:=\int_{\RR}\left(f''(x)\overline{g''(x)}+2\sigma^2f'(x)\overline{g'(x)}+\sigma^4f(x)\overline{g(x)}\right)\ud x,\quad f,g\in\Hil^2(\RR).
\]
We can also use the theoretical results of Section~\ref{s:GenSob-NS} to show that $\Hilbert_{K^s}(\RR)\equiv\Hil^2(\RR)\cong\Hilbert_{K}(\RR)$. This means that $\Hil^2(\RR)$ and $\Hilbert_K(\RR)$ are isomorphic and indicates that these reproducing-kernel Hilbert spaces are isometrically equivalent to generalized Sobolev spaces. More details are given in Example~\ref{e:Sobolevspline}.

This example shows that it may make sense to redefine the classical Sobolev space employing different inner products in terms of scale parameters even though $\Hil^2(\RR)$ and $\Hilbert_{K}(\RR)$ are composed of functions with the same smoothness properties and are not distinguished under standard Hilbert space theory (i.e., considered isomorphic). These different inner products provide us with a clearer understanding of the important role of the scale parameter.
This formulation allows us to think of $\sigma^{-1}$ as the natural length scale dependent on the weight of various derivatives. The choice of smoothness and scale now tell us which kernel to use for a particular application. This
choice may be performed by the user based on some a priori knowledge of the problem and based
directly on the data.

\end{example}

In the following section we briefly review how to use a conditionally positive definite function to construct reproducing kernels.


\section{Conditionally Positive Definite Functions and Native Spaces}\label{s:CPD-NS}

Most of the material presented in this section can be found in the excellent monograph \cite{Wen05}. For the reader's convenience we repeat here what is essential to our discussion later on.

\subsection{Conditionally Positive Definite Functions}\label{s:CPD}

\begin{definition}[{\cite[Definition~8.1]{Wen05}}]
A continuous even function $\Phi:\Rd\to\RR$ is said to be a
\emph{conditionally positive definite function of order $m\in\NN_0$}
if, for all $N\in\NN$, all pairwise distinct centers
$\vx_1,\ldots,\vx_N\in \Rd$, and all
$\boldsymbol{c}=(c_1,\cdots,c_N)^T\in\RR^N\setminus \{0\}$
satisfying
\[
\sum_{j=1}^{N}c_jp(\vx_j)=0
\]
for all $p\in\pim$, the quadratic form
\[
\sum_{j=1}^{N}\sum_{k=1}^{N}c_jc_k\Phi(\vx_j-\vx_k)>0.
\]
In the case $m=0$ with $\pi_{-1}(\Rd):=\{0\}$ the function $\Phi$ is called
\emph{positive definite}.
\end{definition}

In general, we can not hope for a continuous $\Phi$ to be $\Leb_1(\Rd)$-integrable so that it has a $\Leb_1(\Rd)$-Fourier transform.
However, $\Phi$ always has a generalized Fourier transform.

Next, we want to have a criterion to decide whether $\Phi$ is a
conditionally positive definite function of order $m\in\NN_0$. In
Wendland's book~\cite{Wen05}, the generalized
Fourier transform of order $m$ is employed to determine the conditional
positive definiteness of $\Phi$. Let a special test function space $\Schwartzm$~\cite[Definition~8.8]{Wen05} be defined as
\[
\Schwartzm:=\left\{\gamma\in\Schwartz:
\gamma(\vx)=\mathcal{O}\left(\norm{\vx}_2^{2m} \right) \text{ as }
\norm{\vx}_2\to 0\right\},
\]
where the \emph{Schwartz space $\Schwartz$}~\cite[Definition~5.17]{Wen05} consists of all functions
$\gamma\in\ContinueInf$ that satisfy
\[
\sup_{\vx\in\Rd}\abs{\vx^{\beta}D^{\alpha}\gamma(\vx)}\leqslant
C_{\alpha,\beta,\gamma}
\]
for all multi-indices $\alpha,\beta\in\NN_0^d$ with a constant
$C_{\alpha,\beta,\gamma}$.

\begin{definition}[{\cite[Definition~8.9]{Wen05}}]\label{d:GenFourier}
Suppose that $\Phi\in\Continue\cap\SI$. A measurable function
$\hat{\phi}\in\Leb_2^{loc}(\Rd\backslash\{0\})$ is called a
\emph{generalized Fourier transform} of $\Phi$ if there exists an
integer $m\in\NN_0$ such that
\[
\int_{\Rd}\Phi(\vx)\hat{\gamma}(\vx)\ud\vx=\int_{\Rd}\hat{\phi}(\vx)\gamma(\vx)\ud\vx,
\quad{}\text{for each }\gamma\in\Schwartzm.
\]
The integer $m$ is called the \emph{order} of $\hat{\phi}$.
\end{definition}
%
\begin{remark}
If $\Phi$ has a generalized Fourier transform of order $m$, then it
has also order $l\geqslant m$. If $\Phi\in\Ltwo\cap\Continue$, then
its $\Ltwo$-Fourier transform is a generalized Fourier transform of
any order.
\end{remark}
%
%
\begin{theorem}[{\cite[Theorem~8.12]{Wen05}}]\label{t:CPD-FT}
Suppose an even function $\Phi\in\RealContinue\cap\SI$ possesses a
generalized Fourier transform $\hat{\phi}$ of order $m$ which is
continuous on $\Rdzero$.
Then $\Phi$ is conditionally positive definite of order $m$ if and
only if $\hat{\phi}$ is nonnegative and nonvanishing.
\end{theorem}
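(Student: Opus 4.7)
The plan is to reduce both directions of the equivalence to a single bridging identity
\[
(2\pi)^{d/2}\sum_{j,k=1}^N c_j c_k\,\Phi(\vx_j-\vx_k)=\int_{\Rd}\hat{\phi}(\vxi)\,\abs{h(\vxi)}^2\,\ud\vxi,
\]
interpreted in a limiting sense, where $h(\vxi):=\sum_{j=1}^N c_j e^{-\mathrm{i}\vxi\cdot\vx_j}$. The left-hand side is the quadratic form appearing in the definition of conditional positive definiteness (using the evenness of $\Phi$), while the right-hand side is manifestly of one sign once $\hat{\phi}$ is known to be nonnegative. The identity itself will be produced by feeding a properly truncated version of $\abs{h}^2$ into Definition~\ref{d:GenFourier}.

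First I would observe that the moment condition $\sum_j c_j p(\vx_j)=0$ for all $p\in\pim$ is equivalent to $h$ vanishing to order $m$ at the origin, since $\partial^{\alpha}h(0)=(-\mathrm{i})^{\abs{\alpha}}\sum_j c_j \vx_j^{\alpha}$. Consequently $\abs{h(\vxi)}^2=\mathcal{O}(\norm{\vxi}_2^{2m})$ as $\vxi\to 0$. Now pick a real, radial Schwartz function $\rho$ with $\rho(0)=1$ and $\hat{\rho}\geqslant 0$ of compact support, and define
\[
\gamma_{\epsilon}(\vxi):=\abs{h(\vxi)}^2\rho(\epsilon\vxi),\qquad \epsilon>0.
\]
Then $\gamma_{\epsilon}\in\Schwartzm$, so Definition~\ref{d:GenFourier} gives $\int \Phi \hat{\gamma}_{\epsilon}=\int \hat{\phi}\gamma_{\epsilon}$. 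Expanding $\abs{h}^2=\sum_{j,k}c_j c_k e^{-\mathrm{i}\vxi\cdot(\vx_j-\vx_k)}$, the transform $\hat{\gamma}_{\epsilon}$ becomes a convolution of $\sum_{j,k}c_j c_k\delta_{\vx_k-\vx_j}$ (up to a $(2\pi)^{d/2}$ factor) with the mollifier $\epsilon^{-d}\hat{\rho}(\cdot/\epsilon)$. Since $\Phi$ is continuous and slowly increasing, letting $\epsilon\to 0$ yields $\int\Phi\hat{\gamma}_{\epsilon}\to(2\pi)^{d/2}\sum_{j,k}c_j c_k\Phi(\vx_k-\vx_j)$, which equals the LHS by evenness of $\Phi$.

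For the ``if'' direction, with $\hat{\phi}\geqslant 0$ and $\rho$ chosen so that $\rho\geqslant 0$ (e.g.\ $\rho=\hat{\eta}\ast\hat{\eta}$ for a suitable $\eta$), the right-hand integral $\int\hat{\phi}\abs{h}^2\rho(\epsilon\cdot)$ is nonnegative, and Fatou's lemma transfers this to the limit, giving $\sum_{j,k}c_j c_k\Phi(\vx_j-\vx_k)\geqslant 0$. Strict positivity follows because $\boldsymbol{c}\neq 0$ together with the linear independence of the exponentials forces $\abs{h}^2$ to be a nontrivial real-analytic function, so $\abs{h}^2>0$ on a dense open set that must meet the open set where the continuous function $\hat{\phi}$ is strictly positive (guaranteed by ``nonvanishing'' plus continuity on $\Rdzero$). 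For the ``only if'' direction, nonvanishing of $\hat{\phi}$ follows immediately: if $\hat{\phi}\equiv 0$ the identity would force every admissible quadratic form to vanish, contradicting CPD. For nonnegativity, assume for contradiction $\hat{\phi}(\vxi_0)<0$ at some $\vxi_0\in\Rdzero$; by continuity $\hat{\phi}<0$ on a small ball $B\subset\Rdzero$, and I would construct an admissible pair $(\boldsymbol{c},X)$ whose associated $\abs{h}^2$ concentrates its mass in $B$, yielding a negative value of the quadratic form.

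The main obstacle is this last concentration step. The test function $\abs{h}^2$ must simultaneously (i) vanish to order $2m$ at the origin, so $\boldsymbol{c}$ satisfies the polynomial side conditions, and (ii) be essentially supported inside $B$. One natural route is to start from a nonnegative Schwartz function $\psi\geqslant 0$ supported in $B$, represent $\sqrt{\psi}$ approximately as a trigonometric polynomial $\sum_j c_j e^{-\mathrm{i}\vxi\cdot\vx_j}$ via a Riemann-sum discretization of an inverse Fourier integral on a large torus, and then correct the low-order Taylor coefficients at the origin by adding a small perturbation involving extra centers to enforce the moment conditions without destroying the localization. Controlling this perturbation carefully so that the negative contribution from $B$ dominates the leakage near $0$ and at infinity is where the bulk of the technical work lies.
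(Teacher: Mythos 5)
You should note first that the paper does not prove this statement at all: it is quoted verbatim from Wendland \cite[Theorem~8.12]{Wen05}, so the only fair comparison is with the standard literature proof. Your sufficiency direction is essentially that proof and is sound: the moment conditions are equivalent to $h$ vanishing to order $m$ at the origin, so $\gamma_{\epsilon}=\abs{h}^2\rho(\epsilon\cdot)\in\Schwartzm$, Definition~\ref{d:GenFourier} applies, the approximate-identity limit produces the bridging identity with the correct $(2\pi)^{d/2}$ factor, Fatou gives nonnegativity of the quadratic form, and strict positivity follows from real-analyticity of $\abs{h}^2$ together with continuity of $\hat{\phi}$ on $\Rdzero$ and the (correct) reading of ``nonvanishing'' as ``not identically zero''. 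The existence of $\rho$ with $\rho\geqslant0$, $\hat{\rho}\geqslant0$ and $\hat{\rho}$ compactly supported is routine (take $\hat{\rho}=u\ast u$ with $u\geqslant0$ smooth, even, compactly supported), and the nonvanishing half of the converse is also fine.

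The genuine gap is the nonnegativity half of the ``only if'' direction, which you yourself flag as unresolved; unfortunately it is not a technicality but the heart of the theorem. To contradict $\hat{\phi}<0$ on a ball $B\subset\Rdzero$ you must exhibit pairwise distinct centers and a nonzero admissible $\boldsymbol{c}$ such that the negative contribution $\int_B\hat{\phi}\abs{h}^2\rho(\epsilon\cdot)$ dominates the rest, and two concrete obstructions stand in the way. First, $\abs{h}^2$ is a trigonometric polynomial, hence non-decaying and (quasi-)periodically recurrent, so it can never be ``essentially supported in $B$'' in a literal sense; the leakage must be beaten by integrating against $\hat{\phi}$, about which you only know $\hat{\phi}\in\Leb_2^{loc}(\Rdzero)$ and finiteness of $\int\hat{\phi}\gamma$ for $\gamma\in\Schwartzm$ --- there is no pointwise bound at infinity, nor near the origin, where $\hat{\phi}$ may be as singular as integrability against $\mathcal{O}(\norm{\cdot}_2^{2m})$ functions allows. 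Second, your proposed correction of the low-order Taylor coefficients by adding extra centers perturbs $h$ by another exponential sum, and you would need this perturbation to be small in the $\hat{\phi}$-weighted sense precisely in the regions (near $0$ and at infinity) where you have no control. The cited proof of Wendland (following Madych--Nelson) is organized exactly around this difficulty, using admissible coefficient vectors built so that the moment conditions hold exactly while the associated symbol concentrates near the chosen frequency in a limit of quadratic forms; without carrying out such a construction and the accompanying estimates, your argument establishes only the sufficiency direction together with the trivial part of the converse, not the full equivalence.
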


\subsection{Native Space and Reproducing-Kernel Hilbert
Space}\label{s:NS-HS}

If $\Phi$ is conditionally positive definite of order $m$ then \cite[Chapter~10.3]{Wen05} shows that $\Phi$ can be used to create a reproducing kernel and its reproducing-kernel Hilbert space. We firstly set up a \emph{native space} $\NativePhi$~\cite[Definition~10.16]{Wen05}. It is a complete semi-inner product space and its null space of $\NativePhi$ is given by $\pim$, i.e., $\abs{p}_{\NativePhi}=0$ if
and only if $p\in\pim\subseteq\NativePhi$.  According to
\cite[Theorem~10.20]{Wen05}, $\NativePhi$ will become a
reproducing-kernel Hilbert space $\Hilbert_K(\Rd)$ with the new inner product
\[
(f,g)_{\Hilbert_K(\Rd)}:=(f,g)_{\NativePhi}+\sum_{k=1}^Q
f(\vxi_k)g(\vxi_k),\quad{}f,g\in\Hilbert_K(\Rd)=\NativePhi,
\]
and its reproducing kernel is given by
\[
\begin{split}
K(\vx,\vy):=&\Phi(\vx-\vy)-\sum_{k=1}^Q
q_k(\vx)\Phi(\vxi_k-\vy)-\sum_{l=1}^Q q_l(\vy)\Phi(\vx-\vxi_l)\\
&+\sum_{k=1}^Q\sum_{l=1}^Q
q_k(\vx)q_l(\vy)\Phi(\vxi_k-\vxi_l)+\sum_{k=1}^Q q_k(\vx)q_k(\vy),
\end{split}
\]
where $\{q_1,\cdots,q_Q\}$ is a Lagrange basis of $\pim$ with respect
to a $\pim$-unisolvent set $\{\vxi_1,\cdots,\vxi_Q\}\subset\Rd$ and $Q=\dim\pim$.
Moreover the reproducing kernel $K$ is positive definite by \cite[Theorem~12.9]{Wen05}. We can also check that the interpolation~(\ref{interpolant})-(\ref{interpolation_conditions}) by $K$
is equivalent to the interpolation by a linear combination of translates of the conditionally positive definite function $G$ of order $m$ along with a basis of polynomials $\pim$ (see~\cite[Chapter~8.5 and Chapter~12.3]{Wen05}), i.e.,
\[
s_{f,X}(\vx)=\sum_{j=1}^Nc_j\Phi(\vx-\vx_j)+\sum_{k=1}^Q\beta_kq_k(\vx),\quad \vx\in\Rd.
\]

\begin{theorem}[{\cite[Theorem~10.21]{Wen05}}]\label{t:NS-FT}
Suppose that $\Phi$ is a conditionally positive definite function of
order $m\in\NN_0$. Further suppose that $\Phi$ has a generalized
Fourier transform $\hat{\phi}$ of order $m$ which is continuous on
$\Rdzero$.
Then its native space is characterized by
\[
\begin{split}
\NativePhi=&\left\{ f\in\RealContinue\cap\SI:  f\text{ has a
generalized Fourier transform }\Hat{f}  \right. \\
&\left.\text{of order }m/2\text{ such that
}\hat{\phi}^{-1/2}\Hat{f}\in\Ltwo \right\},
\end{split}
\]
and its semi-inner product satisfies
\[
(f,g)_{\NativePhi}=(2\pi)^{-d/2}\int_{\Rd}\frac{\Hat{f}(\vx)\overline{\Hat{g}(\vx)}}{\hat{\phi}(\vx)}\ud\vx,
\quad{}f,g\in\NativePhi.
\]
\end{theorem}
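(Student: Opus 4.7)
My plan is to prove the set equality and the semi-inner product formula simultaneously by identifying both sides as completions of a common dense subspace. The natural candidate for this subspace is the space of ``discrete measures with vanishing moments'',
\[
\mathcal{F}_\Phi := \Big\{ \mu = \sum_{j=1}^N c_j \delta_{\vx_j} : N \in \NN,\ \vx_j \in \Rd,\ c_j \in \RR,\ \sum_j c_j p(\vx_j) = 0 \text{ for all } p \in \pim \Big\},
\]
equipped with the inner product $(\mu,\nu)_{\mathcal{F}_\Phi} := \sum_{j,k} c_j d_k \Phi(\vx_j - \vy_k)$, which is positive definite by the conditional positive definiteness of $\Phi$ of order $m$. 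Recall that $\NativePhi$, modulo $\pim$, arises precisely as the completion of $\mathcal{F}_\Phi$ under the representation map $\mu \mapsto f_\mu := \mu \ast \Phi$. I would identify this completion with the weighted $\Ltwo$-space appearing on the right-hand side.

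\textbf{Step 1: key Parseval-type identity.} For $\mu \in \mathcal{F}_\Phi$, set $\check{\mu}(\vxi) := \sum_j c_j e^{-i\vxi\cdot\vx_j}$. The moment conditions force $\check{\mu}$ to vanish to order $m$ at the origin, so $|\check{\mu}|^2 |\hat{\eta}_\epsilon|^2$ (where $\eta_\epsilon$ is a Schwartz mollifier) lies in $\Schwartzm$. Plugging this test function into Definition~\ref{d:GenFourier} of the generalized Fourier transform of order $m$ yields an equality of integrals against $\Phi$ and against $\hat{\phi}$; passing $\epsilon \to 0$ using dominated convergence (with the $\SI$-growth of $\Phi$ controlled by the decay of $\eta_\epsilon \ast \tilde{\eta}_\epsilon$) gives
\[
(\mu,\nu)_{\mathcal{F}_\Phi} = (2\pi)^{-d/2}\int_{\Rd}\check{\mu}(\vxi)\overline{\check{\nu}(\vxi)}\,\hat{\phi}(\vxi)\,\ud\vxi.
\]
Next, through a test-function argument against elements of $\mathcal{S}_m$ (functions vanishing to order $m$ at the origin), I would verify that $f_\mu$ admits a generalized Fourier transform of order $m/2$ equal to $\check{\mu}\cdot\hat{\phi}$. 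Substituting this identification converts the display into
\[
(\mu,\nu)_{\mathcal{F}_\Phi} = (2\pi)^{-d/2}\int_{\Rd}\frac{\hat{f}_\mu(\vxi)\overline{\hat{f}_\nu(\vxi)}}{\hat{\phi}(\vxi)}\,\ud\vxi,
\]
establishing the target formula on the dense subspace.

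\textbf{Step 2: density and extension.} Let $F$ denote the right-hand side of the claimed characterization, equipped with the integral semi-inner product (a genuine inner product modulo $\pim$, since $\hat f$ vanishing on $\Rdzero$ forces $f \in \pim$ by the uniqueness of the generalized Fourier transform). I would show that the image $\{f_\mu : \mu \in \mathcal{F}_\Phi\}$ is dense in $F/\pim$. The approximation proceeds in two stages: first truncate so that $\hat{\phi}^{-1/2}\hat{f}$ is supported in an annulus bounded away from $0$ and $\infty$ (justified because $\hat{\phi}$ is continuous and nonvanishing on $\Rdzero$ by Theorem~\ref{t:CPD-FT}); then approximate such a truncated function in the weighted $\Leb_2(\hat{\phi}\, \ud\vxi)$ norm by trigonometric polynomials $\check{\mu}$ whose derivatives of order less than $m$ vanish at the origin (equivalent to $\mu \in \mathcal{F}_\Phi$). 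Combined with Step~1, this identifies the two semi-inner products on a common dense subspace; taking completions and verifying continuity of point evaluation on both sides (immediate on $\NativePhi$ by the reproducing property, and deducible on $F$ from Cauchy--Schwarz against $\hat{\phi}^{1/2}$ times a cutoff once $\hat{f} \in \Lone$ on a neighborhood of any point) then yields the theorem.

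\textbf{Main obstacle.} The central technical difficulty is matching the singularity of $\hat{\phi}$ at the origin with the vanishing of $\hat{f}$ there. Conditional positive definiteness of order $m$ typically forces $\hat{\phi}$ to have a pole-like singularity at $0$ of order $\sim\|\vxi\|^{-2m}$, and finiteness of $\hat{\phi}^{-1/2}\hat{f}$ in $\Ltwo$ requires an exactly compensating vanishing of $\hat{f}$; this is precisely what the notion of ``generalized Fourier transform of order $m/2$'' is designed to encode, but carrying it through the approximation argument requires carefully tracking moment conditions on $\mu$ and their translation into vanishing derivatives of $\check{\mu}$ at the origin. A secondary bookkeeping issue is the polynomial ambiguity: $f_\mu$ is only defined modulo $\pim$, and one must verify that the induced map $\mathcal{F}_\Phi \to F/\pim$ is a well-defined isometry before extending to completions.
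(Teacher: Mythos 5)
First, a point of reference: the paper does not prove this statement at all --- Theorem~\ref{t:NS-FT} is quoted directly from \cite[Theorem~10.21]{Wen05} --- so your proposal has to be measured against Wendland's proof. Your Step~1 is sound and standard: the Parseval-type identity $(\mu,\nu)_{\mathcal{F}_\Phi}=(2\pi)^{-d/2}\int_{\Rd}\check{\mu}(\vxi)\overline{\check{\nu}(\vxi)}\hat{\phi}(\vxi)\,\ud\vxi$ and the identification of $\check{\mu}\hat{\phi}$ as a generalized Fourier transform of order $m/2$ of $f_\mu$ are exactly the auxiliary facts used in \cite[Chapter~10.3]{Wen05}. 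The genuine gap is Step~2. The density of $\{f_\mu:\mu\in\mathcal{F}_\Phi\}$ in the Fourier-side space $F$ is not a routine approximation fact: it carries essentially the whole weight of the hard inclusion $F\subseteq\NativePhi$, and the two-stage scheme you sketch does not obviously close. After truncating $\hat{f}$ to an annulus $A$, you need $\check{\mu}\hat{\phi}^{1/2}\to\hat{f}\chi_A\hat{\phi}^{-1/2}$ in $\Ltwo$ over all of $\Rd$, not merely on $A$. An exponential sum $\check{\mu}$ is almost periodic and does not decay, so you must make $\int_{\Rd\setminus A}\abs{\check{\mu}}^2\hat{\phi}\,\ud\vxi$ small: at infinity this needs integrability of $\hat{\phi}$ (true, but a fact that must be extracted from the continuity of $\Phi$, not assumed), and near the origin $\hat{\phi}$ may blow up like $\norm{\vxi}_2^{-2m}$ while the bound $\abs{\check{\mu}(\vxi)}^2\leqslant C_\mu\norm{\vxi}_2^{2m}$ carries a constant $C_\mu$ governed by the order-$m$ moments $\sum_j c_j\vx_j^\alpha$, $\abs{\alpha}=m$, which are not controlled by $\norm{\mu}_{\mathcal{F}_\Phi}$ and may blow up as the approximation is refined. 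So the sentence ``then approximate \ldots by trigonometric polynomials $\check{\mu}$'' is precisely the missing proof; you flag this as the main obstacle but do not resolve it.

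For comparison, Wendland's proof avoids the density question altogether by establishing two isometric embeddings. For $f\in F$ one shows, with a mollifier argument of exactly the kind you use in Step~1, that $\lambda(f)=\sum_j c_jf(\vx_j)=(2\pi)^{-d/2}\int_{\Rd}\hat{f}(\vxi)\overline{\check{\lambda}(\vxi)}\,\ud\vxi$ for every $\lambda\in\mathcal{F}_\Phi$; splitting the weight as $\hat{\phi}^{-1/2}\cdot\hat{\phi}^{1/2}$ and applying Cauchy--Schwarz gives $\abs{\lambda(f)}\leqslant\abs{f}_F\,\norm{\lambda}_{\mathcal{F}_\Phi}$, and the characterization of $\NativePhi$ through boundedness of these functionals (part of the native-space machinery of \cite[Chapter~10.3]{Wen05} that you are already assuming) yields $F\subseteq\NativePhi$ with $\abs{f}_{\NativePhi}\leqslant\abs{f}_F$. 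Conversely, for $f\in\NativePhi$ one writes $f$ modulo $\pim$ as a native-seminorm limit of functions $f_{\mu_n}$, notes that $\check{\mu}_n\hat{\phi}$ is then Cauchy in $\Ltwo$ with weight $\hat{\phi}^{-1}$, and verifies that its limit is a generalized Fourier transform of $f$ of order $m/2$ with $\abs{f}_F\leqslant\abs{f}_{\NativePhi}$. If you wish to keep your completion picture, replacing the explicit trigonometric approximation of Step~2 by this duality argument is the cleanest repair; as written, Step~2 is a gap rather than a proof.
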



\section{Green Functions and Generalized Sobolev Space Connected to Conditionally Positive Definite Functions and Native Space}\label{s:GenSob-NSG}

\subsection{Distributional Operators and Distributional Adjoint Operators}\label{s:distributions}

First, we can define a metric $\rho$ on the Schwartz space
$\Schwartz$ so that it becomes a Fr\'{e}chet space. Together with its metric $\rho$ the
Schwartz space $\Schwartz$ is regarded as the classical test
function space.

Let $\Schwartz'$ be the space of tempered distributions associated with $\Schwartz$ (the dual space of $\Schwartz$, or space of continuous linear functionals on $\Schwartz$). We introduce the notation
\[
\langle T,\gamma\rangle:=T(\gamma),\quad\text{for each
}T\in\Schwartz' \text{ and }\gamma\in\Schwartz.
\]
For each $f\in\Lloc\cap\SI$ there exists a
unique tempered distribution $T_f\in\Schwartz'$ such that
\[
\langle T_f,\gamma\rangle =\int_{\Rd}f(\vx)\gamma(\vx)\ud\vx,\quad\text{for each }
\gamma\in\Schwartz.
\]
So $f\in\Lloc\cap\SI$ can be viewed as an element of $\Schwartz'$
and we rewrite $T_f:=f$. This means that $\Lloc\cap\SI$ can be
isometrically embedded into $\Schwartz'$, i.e., $\Lloc\cap\SI\subseteq\Schwartz'$.
The \emph{Dirac delta function} (Dirac distribution) $\delta_0$
concentrated at the origin is also an element of $\Schwartz'$, i.e.,
$\langle \delta_0,\gamma\rangle =\gamma(0)$ for each
$\gamma\in\Schwartz$. Much more detail of the distributions are discussed in \cite[Chapter~7.1]{Hor04} and~\cite[Chapter~1.3]{SteWei75}.

Given a linear operator $P:\Schwartz'\rightarrow\Schwartz'$, is it
always possible to define a linear (adjoint) operator
$P^{\ast}:\Schwartz'\rightarrow\Schwartz'$ which also satisfies the usual adjoint
properties? The answer to this question is that it may not be possible for all $P$.
However, adjoint operators are well-defined for certain special
linear operators. We will refer to these special linear operators as
\emph{distributional operators} and to their adjoint operators as
\emph{distributional adjoint operators} in this article.

We first introduce these linear operators on $\Schwartz'$. Let
$\mathcal{P}^{\ast} : \Schwartz \to \Schwartz$ be a continuous linear operator.
Then a linear operator $P:\Schwartz'\rightarrow\Schwartz'$ induced by
$\mathcal{P}^{\ast}$ can be denoted via the form
\[
\langle PT,\gamma\rangle:=\langle
T,\mathcal{P}^{\ast}\gamma\rangle,\quad\text{for each
}T\in\Schwartz' \text{ and }\gamma\in\Schwartz,\quad{}\text{i.e.,
}P(T):=T\circ\mathcal{P}^{\ast}.
\]

Furthermore, if $P|_{\Schwartz}$ is a continuous operator from
$\Schwartz$ into $\Schwartz$, i.e.,
$\left\{P\gamma:\gamma\in\Schwartz\right\}\subseteq\Schwartz$ and
$\rho(P\gamma_n,P\gamma)\rightarrow0$ when
$\rho(\gamma_n,\gamma)\rightarrow0$, then we call the linear operator $P$
a \emph{distributional operator}.

Next we will show that the
adjoint operators of these distributional operators are well-defined
in the following way. In the same manner as before, we can denote another
linear operator $P^{\ast}:\Schwartz'\rightarrow\Schwartz'$ induced
by $P|_{\Schwartz}$, i.e.,
\[
\langle P^{\ast}T,\gamma\rangle:=\langle
T,P|_{\Schwartz}\gamma\rangle=\langle
T,P\gamma\rangle,\quad\text{for each }T\in\Schwartz' \text{ and
}\gamma\in\Schwartz.
\]

Fixing any $\tilde{\gamma}\in\Schwartz$, we have
\[
\langle P^{\ast}\tilde{\gamma},\gamma\rangle=\langle
\tilde{\gamma},P\gamma\rangle=\int_{\Rd} \tilde{\gamma}(\vx) P\gamma(\vx)\ud\vx=\langle
P\gamma, \tilde{\gamma} \rangle=\langle \gamma,
\mathcal{P}^{\ast}\tilde{\gamma} \rangle=\langle
\mathcal{P}^{\ast}\tilde{\gamma},\gamma \rangle,
\]
for each $\gamma\in\Schwartz$ which implies that
$P^{\ast}\tilde{\gamma}= \mathcal{P}^{\ast}\tilde{\gamma}$. Hence
$P^{\ast}|_{\Schwartz}=\mathcal{P}^{\ast}$ on $\Schwartz$ and
$P^{\ast}|_{\Schwartz}$ is a continuous operator from
$\Schwartz$ into $\Schwartz$. Therefore $P^{\ast}$ is also a
distributional operator. This motivates us to call $P^{\ast}$ the
\emph{distributional adjoint operator} of $P$. According to the
above definition, $P$ is also the distributional adjoint operator of
the distributional operator $P^{\ast}$.

%
\begin{remark}
In the standard literature~\cite[Chapter~8.3]{Hor04}
$P^{\ast}|_{\Schwartz}$ corresponds to the classical adjoint operator
of $P$. Here we can think of the classical adjoint operator
$P^{\ast}|_{\Schwartz}$ being extended to the distributional adjoint
operator $P^{\ast}$.
Our distributional adjoint operator differs from the adjoint
operator of a bounded linear operator defined in Hilbert space or
Banach space. Our operator is defined in the dual space of the
Schwartz space and it may not be a bounded operator if $\Schwartz'$
is defined as a metric space. But it is continuous when $\Schwartz'$
is given the weak-star topology as the dual of $\Schwartz$. However,
since the fundamental idea of our construction is similar to the
classical ones we also call this an adjoint.
\end{remark}

We now summarize the definitions of the distributional operator and its
adjoint operator.

\begin{definition}\label{d:DistrAdjoint}
Let $P,P^{\ast}:\Schwartz'\rightarrow\Schwartz'$ be two linear
operators. If $P|_{\Schwartz}$ and $P^{\ast}|_{\Schwartz}$ are
continuous operators from $\Schwartz$ into $\Schwartz$ such that
\[
\langle PT,\gamma\rangle=\langle T,P^{\ast}\gamma\rangle~\text{ and
}~\langle P^{\ast}T,\gamma\rangle=\langle T,P\gamma\rangle,
\quad\text{for each }T\in\Schwartz' \text{ and }\gamma\in\Schwartz,
\]
then $P$ and $P^{\ast}$ are said to be \emph{distributional
operators} and, moreover, $P^{\ast}$ (or $P$) is called a
\emph{distributional adjoint operator} of $P$ (or $P^{\ast}$).
\end{definition}

We will simplify the term distributional adjoint operator to adjoint operator in this article.

%
If $P=P^{\ast}$, then we call $P$ \emph{self-adjoint}. A distributional operator $P$
is called \emph{translation invariant} if
\[
\tau_hP\gamma=P\tau_h\gamma,\quad\text{for each }h\in\Rd\text{ and
}\gamma\in\Schwartz,
\]
where $\tau_h$ is defined by $\tau_h\gamma(\vx):=\gamma(\vx-h)$. A
distributional operator is called \emph{complex-adjoint
invariant} if
\[
\overline{P\gamma}=P\overline{\gamma},\quad\text{for each
}\gamma\in\Schwartz.
\]

Now we introduce two typical examples of distributional operators. One is the \emph{differential
operator} (with constant coefficients) which is a linear combination of the \emph{distributional derivatives}
$P:=D^{\alpha}:\Schwartz'\rightarrow\Schwartz'$. The distributional derivative is extended by the (strong) derivative
\[
D^{\alpha}:=\prod_{k=1}^{d}\frac{\partial^{\alpha_k}}{\partial
x_k^{\alpha_k}}, \quad{}\abs{\alpha}:=\sum_{k=1}^d\alpha_k,\quad{}
\alpha:=\left(\alpha_1,\cdots,\alpha_d\right)^T\in\NN_0^d,
\]
for the formula
\[
\langle D^{\alpha}T,\gamma\rangle:=(-1)^{\abs{\alpha}}\langle
T,D^{\alpha}\gamma\rangle, \quad\text{for each }T\in\Schwartz'
\text{ and }\gamma\in\Schwartz,
\]
(see~\cite[Chapter~1.5]{AdaFou03}). It is easy to check that the distributional derivative is a distributional operator. So we can determine that the differential
operator is a distributional operator, i.e.,
\[
P:=\sum_{\abs{\alpha}\leqslant
n}c_{\alpha}D^{\alpha},\quad P^{\ast}:=\sum_{\abs{\alpha}\leqslant
n}(-1)^{\abs{\alpha}}c_{\alpha}D^{\alpha},
\quad{}\text{where }c_{\alpha}\in\CC\text{
and }\alpha\in\NN_0^d,~n\in\NN_0.
\]

The other kind of distributional operator is defined for any fixed function
\[
\hat{p}\in\FT:=\left\{f\in\ContinueInf:D^{\alpha}f\in\SI\text{ for
each }\alpha\in\NN_0^d\right\}.
\]
It is obvious that all complex-valued polynomials belong to $\FT$.
Since $\hat{p}\gamma\in\Schwartz$ for each $\gamma\in\Schwartz$, we
can verify that the linear operator $\gamma\mapsto
\hat{p}\gamma$ is a continuous operator from $\Schwartz$ into
$\Schwartz$.
Thus this distributional operator $P$ related to
$\hat{p}$ is denoted as
\[
\langle PT,\gamma\rangle:=\langle
T,\hat{p}\gamma\rangle,\quad\text{for each }T\in\Schwartz' \text{
and }\gamma\in\Schwartz.
\]
We can further check that this operator is self-adjoint and
$Pg=\hat{p}g\in\Lloc\cap\SI$ if $g\in\Lloc\cap\SI$. Therefore we use the
notation $P:=\hat{p}$ for convenience. The $\FT$ space is also
applied in the definition of distributional Fourier transforms of distributional operators in
Section~\ref{s:DistFT}.

%
%

\subsection{Distributional Fourier Transforms}\label{s:DistFT}

We denote $\hat{\gamma}\in\Schwartz$ and $\check{\gamma}\in\Schwartz$ to be the $\Leb_1(\Rd)$-\emph{Fourier transform} and \emph{inverse} $\Leb_1(\Rd)$-\emph{Fourier transform} (unitary and using angular frequency) of the test function $\gamma\in\Schwartz$ as in \cite[Definition~5.15]{Wen05}.

Following the theoretical results of
\cite[Chapter~7.1]{Hor04} and~\cite[Chapter~1.3]{SteWei75} we can define the
\emph{distributional Fourier transform} $\Hat{T}\in\Schwartz'$ of
the tempered distribution $T\in\Schwartz'$ by
\[
\langle \Hat{T},\gamma\rangle :=\langle T,\Hat{\gamma}\rangle,
\quad\text{for each }\gamma\in\Schwartz.
\]
The fact $\langle T,\overline{\gamma}\rangle =\langle
\Hat{T},\overline{\Hat{\gamma}}\rangle$ implies that the $\Leb_1(\Rd)$-Fourier transform of $\gamma\in\Schwartz$ is the same as its
distributional transform. If $f\in\Ltwo$, then its $\Ltwo$-Fourier
transform is equal to its distributional Fourier transform.
The distributional Fourier transform $\Hat{\delta}_0$ of the Dirac delta
function $\delta_0$ is equal to $(2\pi)^{-d/2}$. Moreover, we can
check that the distributional Fourier transform map is an
isomorphism of the topological vector space $\Schwartz'$ onto
itself. This shows that the distributional Fourier transform map is
also a distributional operator.

If $\Phi\in\Continue\cap\SI$ has the generalized Fourier transform
$\hat{\phi}$ of order $m$, then its generalized Fourier transform and
its distributional Fourier transform coincide on the set
$\Schwartzm$, i.e.,
\[
\langle \Hat{\Phi},\gamma\rangle =\langle
\Phi,\hat{\gamma}\rangle=\int_{\Rd}\Phi(\vx)\hat{\gamma}(\vx)\ud\vx=\int_{\Rd}\hat{\phi}(\vx)\gamma(\vx)\ud\vx,
\quad\text{for each }\gamma\in\Schwartzm.
\]
Even if $\Phi$ does not have any generalized Fourier transform, it
always has a distributional Fourier transform $\hat{\Phi}$ since
$\Phi$ can be seen as a tempered distribution.

Our main goal in this subsection is to define the distributional Fourier transform of a
distributional operator induced by the $\FT$ space introduced in
Section~\ref{s:distributions}.
\begin{definition}\label{d:DistrFourier}
Let $P$ be a distributional operator. If there is a function
$\hat{p}\in\FT$ such that
\[
\langle \widehat{PT},\gamma\rangle  =\langle
\hat{p}\Hat{T},\gamma\rangle = \langle \Hat{T},\hat{p}\gamma\rangle,
\quad\text{for each }T\in\Schwartz' \text{ and }\gamma\in\Schwartz,
\]
then $\hat{p}$ is said to be a \emph{distributional Fourier
transform} of $P$.
\end{definition}
If $P$ has the distributional Fourier transform $\hat{p}$, then $P$
is translation-invariant because
$\widehat{\tau_hP\gamma}(\vx)=e^{-i\vx^Th}\hat{p}(\vx)\hat{\gamma}(\vx)=\widehat{P\tau_h\gamma}(\vx)$
for each $h\in\Rd$ and $\gamma\in\Schwartz$. Moreover, if $P$ is
complex-adjoint invariant and has the distributional Fourier
transform $\hat{p}$, then
\[
\langle \overline{\Hat{p}}\Hat{T},\overline{\gamma}\rangle =\langle
\Hat{T},\overline{\Hat{p}}\overline{\Hat{\check{\gamma}}}\rangle
=\langle \Hat{T},\overline{\widehat{P\check{\gamma}}}\rangle
=\langle T,\overline{P\check{\gamma}}\rangle=\langle
T,P\overline{\check{\gamma}}\rangle =\langle
P^{\ast}T,\overline{\check{\gamma}}\rangle = \langle
\widehat{P^{\ast}T},\overline{\gamma}\rangle
\]
for each $T\in\Schwartz'$ and $\gamma\in\Schwartz$. This shows that
$\overline{\Hat{p}}$ is the distributional Fourier transform of the
adjoint operator $P^{\ast}$ of $P$.

Because of $D^{\alpha}\hat{\gamma}=\left(\overline{\hat{p}}\gamma\right)^{\hat{}}$
for each $\gamma\in\Schwartz$,
we can show that any distributional derivative $D^{\alpha}$ has the
distributional Fourier transform $\hat{p}(\vx):=(i\vx)^{\alpha}$
where $i=\sqrt{-1}$.
This also implies that the
distributional Fourier transform $\hat{p}^{\ast}$ of its adjoint
operator $(-1)^{\abs{\alpha}}D^{\alpha}$ is equal to
$\hat{p}^{\ast}(\vx)=(-i\vx)^{\alpha}=\overline{\hat{p}(\vx)}$.
Furthermore, we can also obtain the distributional Fourier transform
of a differential operator in the same way, e.g.,
\[
\hat{p}(\vx)=\sum_{\abs{\alpha}\leqslant
n}c_{\alpha}(i\vx)^{\alpha},\quad{}\text{where
}P=\sum_{\abs{\alpha}\leqslant
n}c_{\alpha}D^{\alpha},~c_{\alpha}\in\CC\text{ and
}\alpha\in\NN_0^d,~n\in\NN_0.
\]

%
%

\subsection{Green Functions and Generalized Sobolev Space}\label{s:GenSob-NS}

\begin{definition}\label{d:GreenFct}
$G$ is the (full-space) \emph{Green function with respect to the
distributional operator $L$} if $G\in\Schwartz'$ satisfies the
equation
\begin{equation}\label{Green}
LG=\delta_0.
\end{equation}
\end{definition}
Equation~(\ref{Green}) is to be interpreted in the sense of distributions which means that
$\langle G,L^{\ast}\gamma \rangle=\langle LG,\gamma \rangle=\langle
\delta_0,\gamma \rangle=\gamma(0)$ for each $\gamma\in\Schwartz$.

According to Theorem~\ref{t:CPD-FT} and~\cite{MadNel90} we can
obtain the following theorem.

\begin{theorem}\label{t:GF-CPD}
Let $L$ be a distributional operator with
distributional Fourier transform $\hat{l}$. Suppose that $\Hat{l}$
is positive on $\Rdzero$. Further suppose that $\Hat{l}^{-1}\in\SI$
and that $\Hat{l}(\vx)=\Theta(\norm{\vx}_2^{2m})$ as
$\norm{\vx}_2\to 0$ for some $m\in\NN_0$. If the Green function
$G\in\RealContinue\cap\SI$ with respect to $L$ is an even function,
then $G$ is a conditionally positive definite function of order $m$
and
\[
\genFourG(\vx):=(2\pi)^{-d/2}\Hat{l}(\vx)^{-1},\quad{}\vx\in\Rd,
\]
is the generalized Fourier transform of order $m$ of $G$. (Here the
notation $f=\Theta(g)$ means that there are two positive numbers $M_1$
and $M_2$ such that $M_1\abs{g}\leqslant\abs{f}\leqslant
M_2\abs{g}$.)
\end{theorem}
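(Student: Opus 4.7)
The plan is to identify $\genFourG(\vx):=(2\pi)^{-d/2}\hat{l}(\vx)^{-1}$ as a generalized Fourier transform of $G$ of order $m$ in the sense of Definition~\ref{d:GenFourier}, and then invoke Theorem~\ref{t:CPD-FT}. Once that identification is made, $\genFourG$ is $C^\infty$ on $\Rdzero$ (because $\hat{l}\in\FT$ is smooth and strictly positive there) and positive, hence continuous, nonnegative and nonvanishing on $\Rdzero$, so Theorem~\ref{t:CPD-FT} immediately yields that the even function $G\in\RealContinue\cap\SI$ is conditionally positive definite of order $m$.

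For the identification I would first take the distributional Fourier transform of the defining equation $LG=\delta_0$. By Definition~\ref{d:DistrFourier} and $\hat{\delta_0}=(2\pi)^{-d/2}$, this produces the distributional identity $\hat{l}\,\hat{G}=(2\pi)^{-d/2}$ in $\Schwartz'$, i.e.\ for every $\tau\in\Schwartz$,
\[
\langle\hat{G},\hat{l}\tau\rangle=(2\pi)^{-d/2}\int_{\Rd}\tau(\vx)\,\ud\vx.
\]
Fix $\gamma\in\Schwartzm$. The formal substitution $\tau=\hat{l}^{-1}\gamma$, combined with $\langle\hat{G},\gamma\rangle=\langle G,\hat{\gamma}\rangle=\int G\hat{\gamma}$ (valid because $G\in\Continue\cap\SI\subset\Schwartz'$), would produce the identity
$\int_{\Rd}G(\vx)\hat{\gamma}(\vx)\,\ud\vx=\int_{\Rd}\genFourG(\vx)\gamma(\vx)\,\ud\vx$ required by Definition~\ref{d:GenFourier}. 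Note that $\genFourG\gamma\in\Lone$: the $\Theta$-hypothesis gives $\hat{l}^{-1}=\mathcal{O}(\norm{\vx}_2^{-2m})$ near $0$, which is cancelled by $\gamma(\vx)=\mathcal{O}(\norm{\vx}_2^{2m})$, and $\hat{l}^{-1}\in\SI$ together with Schwartz decay of $\gamma$ handles the tails.

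The main obstacle is that $\hat{l}^{-1}\gamma$ need not itself be Schwartz, since the pointwise bound $\gamma=\mathcal{O}(\norm{\vx}_2^{2m})$ does not directly control derivatives of $\gamma$ at the origin, so the substitution $\tau=\hat{l}^{-1}\gamma$ is not immediately legitimate. I would circumvent this by a cutoff approximation. Pick $\chi\in\ContinueInfComp$ with $\chi\equiv 1$ on a neighbourhood of $0$, set $\gamma_\epsilon:=(1-\chi(\cdot/\epsilon))\gamma\in\Schwartzm$ and $\tau_\epsilon:=\hat{l}^{-1}\gamma_\epsilon\in\Schwartz$ (Schwartz because $\tau_\epsilon$ vanishes in a neighbourhood of $0$, while $\hat{l}^{-1}$ is $C^\infty$ with polynomially controlled derivatives off any such neighbourhood). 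Applying the distributional identity to $\tau_\epsilon$ yields
\[
\int_{\Rd}G(\vx)\hat{\gamma_\epsilon}(\vx)\,\ud\vx=\langle\hat{G},\hat{l}\tau_\epsilon\rangle=(2\pi)^{-d/2}\int_{\Rd}\tau_\epsilon(\vx)\,\ud\vx=\int_{\Rd}\genFourG(\vx)\gamma_\epsilon(\vx)\,\ud\vx,
\]
and the proof reduces to letting $\epsilon\to 0$. On the right, $\abs{\gamma_\epsilon}\leq\abs{\gamma}$ and $\genFourG\gamma\in\Lone$ give convergence by dominated convergence. On the left, I would use that $\gamma-\gamma_\epsilon=\chi(\cdot/\epsilon)\gamma$ is smooth, supported in $\{\norm{\vx}_2\leq 2\epsilon\}$, and vanishes there to order $2m$ (since $\gamma\in\Schwartzm$ and Taylor expansion force all derivatives of order below $2m$ to vanish at $0$) to derive, via integration-by-parts estimates, uniform rapid-decay bounds on $\hat{\gamma_\epsilon}-\hat{\gamma}$ that majorise the polynomial growth of $G\in\SI$ by a single $\Lone$ dominant. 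This limit is the technical heart of the argument; once it is in hand, $\genFourG$ is a generalized Fourier transform of order $m$ of $G$, and Theorem~\ref{t:CPD-FT} concludes the proof.
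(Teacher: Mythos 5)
Your overall strategy is the paper's: Fourier-transform $LG=\delta_0$ to get $\hat{l}\hat{G}=(2\pi)^{-d/2}$, verify that $\genFourG$ satisfies the defining identity of a generalized Fourier transform of order $m$ on $\Schwartzm$, and then invoke Theorem~\ref{t:CPD-FT}. Where you differ is in how you legitimize the formal substitution $\tau=\hat{l}^{-1}\gamma$. The paper splits into the cases $\hat{l}(0)>0$ (where $\hat{l}^{-1}\in\FT$ and the substitution is immediate) and $\hat{l}(0)=0$, and in the latter case regularizes the \emph{symbol}: it builds positive $\hat{l}_n\in\FT$ agreeing with $\hat{l}$ away from the origin, sets $\gamma_n:=\hat{l}_n^{-1}\gamma\in\Schwartz$, and passes to the limit. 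You instead regularize the \emph{test function}: excise a shrinking neighbourhood of the origin via $\gamma_\epsilon=(1-\chi(\cdot/\epsilon))\gamma$, so that $\hat{l}^{-1}\gamma_\epsilon\in\Schwartz$, with no case distinction needed. Both devices reduce to the same analytic crux, namely showing that $\int_{\Rd}G\,\widehat{\gamma_\epsilon}\to\int_{\Rd}G\,\hat{\gamma}$ (respectively $\int G\,(\hat{l}\gamma_n)\hat{}\to\int G\,\hat{\gamma}$ in the paper); your cutoff is arguably tidier since the error $\gamma-\gamma_\epsilon=\chi(\cdot/\epsilon)\gamma$ is explicitly supported near the origin and vanishes there to order $2m$, which is exactly the structure your integration-by-parts estimate exploits.

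One caveat on the step you yourself call the technical heart: the uniform-in-$\epsilon$ decay you can extract from the order-$2m$ vanishing is of size $\mathcal{O}\bigl((1+\norm{\vw}_2)^{-(d+2m)}\bigr)$, so the "single $\Lone$ dominant" argument only closes when the polynomial growth order of $G$ is below $2m$; for Green functions of larger growth (which the hypotheses do not literally exclude, since one may add even polynomial elements of the null space of $L$ of degree $2m$) the dominated-convergence bound degenerates. This is not a defect you introduced: the paper's own limiting step ("we can also check that $\ldots$ converges") glosses exactly the same point, and in all the paper's examples the growth of $G$ is indeed below $2m$. So your proposal is correct at the same level of rigor as the published proof, with a different but equivalent regularization.
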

%
\begin{proof}

First we want to prove that $\genFourG$ is the generalized
Fourier transform of order $m$ of $G$. Since $\Hat{l}^{-1}\in\SI$
and $\Hat{l}(\vx)=\Theta(\norm{\vx}_2^{2m})$ as $\norm{\vx}_2\to 0$
for some $m\in\NN_0$, the product $\genFourG\gamma$ is
integrable for each $\gamma\in\Schwartzm$. Let $\widehat{G}$ be the distributional Fourier
transform of $G$. If we can verify that
\[
\langle \widehat{G},\gamma\rangle=\int_{\Rd}\genFourG(\vx)\gamma(\vx)\ud\vx,\quad{}\text{for
each }\gamma\in\Schwartzm,
\]
then we are able to conclude that $\genFourG$ is the generalized
Fourier transform of $G$.

Since $\hat{l}$ is the distributional Fourier transform of the
distributional operator $L$ we know that $\hat{l}\in\FT$. Thus
$D^{\alpha}\left(\hat{l}^{-1}\right)\in\SI$ for each
$\alpha\in\NN_0^d$ because of $D^{\alpha}\hat{l}\in\SI$ and
$\hat{l}^{-1}\in\SI$. If $\hat{l}(0)>0$, then $\hat{l}^{-1}\in\FT$,
which implies that $\hat{l}^{-1}\gamma\in\Schwartz$ for each fixed
$\gamma\in\Schwartzm$. Hence
\[
\begin{split}
&\langle \widehat{G},\gamma\rangle=\langle
\hat{l}\widehat{G},\hat{l}^{-1}\gamma\rangle= \langle
\widehat{LG},\hat{l}^{-1}\gamma\rangle= \langle
\hat{\delta}_0,\hat{l}^{-1}\gamma\rangle
= \langle (2\pi)^{-d/2},\hat{l}^{-1}\gamma\rangle \\
= &
\int_{\Rd}(2\pi)^{-d/2}\hat{l}(\vx)^{-1}\gamma(\vx)\ud\vx=\int_{\Rd}\genFourG(\vx)\gamma(\vx)\ud\vx.
\end{split}
\]

If $\hat{l}(0)=0$, then $\hat{l}^{-1}$ does not belong to $\FT$. However,
since $\hat{l}\in\FT$ is positive on $\Rdzero$ we can find a
positive-valued sequence
$\{\hat{l}_{n}\}_{n=1}^{\infty}\subset\ContinueInf$ such that
\[
\hat{l}_{n}(\vx)=
\begin{cases}
\hat{l}(\vx),&~\norm{\vx}_2>n^{-1},\\
\hat{l}(\vx)+n^{-1},&~\norm{\vx}_2<n^{-2}.
\end{cases}
\]
In particular $l_1\equiv1$. And then
$\{\hat{l}_{n}\}_{n=1}^{\infty}\subset\FT$. It further follows that
$D^{\alpha}\hat{l}_{n}$ converges uniformly to $D^{\alpha}\hat{l}$
on $\Rd$ for each $\alpha\in\NN_0^d$.


We now fix an arbitrary $\gamma\in\Schwartzm$. Since $\hat{l}_{n}^{-1}\gamma$ and
$\hat{l}^{-1}\gamma$ have absolutely finite integral,
$\hat{l}_{n}^{-1}\gamma$ converges to $\hat{l}^{-1}\gamma$ in the
integral sense. Let $\gamma_{n}:=\hat{l}_{n}^{-1}\gamma$.
We can also check that $\left(\hat{l}\gamma_n\right)\hat{}$ converges to $\hat{\gamma}$ point wisely which indicates that $\int_{\Rd}G(\vx)
\left(\hat{l}\gamma_{n}\right)\hat{}(\vx)\ud\vx$ converges to $\int_{\Rd} G(\vx)\hat{\gamma}(\vx)\ud\vx$. Thus we have
\begin{align*}
&\langle
\gamma,\hat{G} \rangle_{\Schwartz}=
\lim_{n\to\infty}\langle
\hat{l}\gamma_{n},\hat{G} \rangle_{\Schwartz}=\lim_{n\to\infty}\langle
\gamma_{n},\widehat{LG} \rangle_{\Schwartz} = \lim_{n\to\infty}\langle
\gamma_{n},\hat{\delta}_0 \rangle_{\Schwartz} = \lim_{n\to\infty}\langle
\gamma_{n},(2\pi)^{-d/2} \rangle_{\Schwartz}\\
=&\lim_{n\to\infty}\int_{\Rd}(2\pi)^{-d/2}l_n(\vx)^{-1}\gamma(\vx)\ud\vx=
\int_{\Rd}(2\pi)^{-d/2}\hat{l}(\vx)^{-1}\gamma(\vx)\ud\vx
=\int_{\Rd}\hat{G}_m(\vx)\gamma(\vx)\ud\vx.
\end{align*}

Since $\genFourG\in\Continuezero$ is positive on $\Rdzero$
and $G\in\RealContinue\cap\SI$ is an even function, we can use
Theorem~\ref{t:CPD-FT} to conclude that $G$ is a conditionally
positive definite function of order $m$.
\qed
\end{proof}
%
\begin{remark}
If $L$ is a differential operator, then its distributional Fourier
transform $\hat{l}$ satisfies the conditions of
Theorem~\ref{t:GF-CPD} if and only if $\hat{l}$ has a polynomial
of the form $\hat{l}(\vx):=q(\vx)+a_{2m}\norm{\vx}_2^{2m}$, where
$a_{2m}>0$ and $q$ is a polynomial of degree greater
than $2m$ so that it is positive on $\Rdzero$, or $q\equiv0$.
\end{remark}

%
%

Now we can define the generalized Sobolev space induced by a vector
distributional operator $\vP=\left(P_1, \cdots ,P_n, \cdots
\right)^T$ similar as in~\cite[Definition~6]{Ye10}.
\begin{definition}\label{d:GenSolSp}
Consider the vector distributional operator  $\vP=\left(P_1, \cdots
,P_n, \cdots \right)^T$ consisting of countably many distributional
operators $\{P_j\}_{j=1}^{\infty}$. The real
\emph{generalized Sobolev space} induced by $\vP$ is defined by
\[
\Hp:= \Big\{ f\in\RealLloc\cap\SI: \{P_j
f\}_{j=1}^{\infty}\subseteq\Ltwo\text{ and }
\sum_{j=1}^{\infty}\norm{P_jf}_{\Ltwo}^2<\infty \Big\}
\]
and it is equipped with the semi-inner product
\[
(f,g)_{\Hp}:=\sum_{j=1}^{\infty}\int_{\Rd}P_j f(\vx)\overline{P_j g(\vx)}\ud\vx, \quad{
}f,g\in\Hp.
\]
\end{definition}

For example, if we let $P_j:=D^{\alpha}$ for each $\alpha\in\NN_0^d$ and $\abs{\alpha}\leqslant n$ and the others be zero operators, then the classical $\Leb_2$-based Sobolev space $\Hil^n(\Rd)\equiv\Hp$ is a special case of the generalized Sobolev space. If we choose the vector distributional operator $\vP$ as in Example~\ref{e:Sobolevspline} then
$\Hp$ and $\Hil^n(\Rd)$ are isomorphic to each other which indicates that we redefine the Sobolev space for different inner products using the scale parameter $\sigma>0$. Generalized Sobolev spaces can also become different kinds of Beppo-Levi spaces with corresponding semi-inner products (see Example~\ref{e:polyharmonic}). The reproducing-kernel Hilbert space of the Gaussian kernel will be isometrically equivalent to a generalized Sobolev space $\Hp$ as well as explained in Example~\ref{e:Gaussian}.

Now we discuss the relationship between the generalized Sobolev space and the native space.
In the following theorems of this section we only consider $\vP$
constructed by a finite number of distributional operators
$P_1,\ldots,P_n$ which means that $P_j:=0$ when $j>n$. If $\vP:=\left(P_1, \cdots ,P_n \right)^T$, then
the distributional operator
\[
L:=\vP^{\ast T}\vP=\sum_{j=1}^nP_j^{\ast}P_j
\]
is well-defined, where $\vP^{\ast}:=(P_1^{\ast}, \cdots ,P_n^{\ast}
)^T$ is the adjoint operator of $\vP$ as defined in Section~\ref{s:distributions}. If we suppose that $\vP$ is complex-adjoint
invariant with distributional Fourier transform $\FvP=(
\Hat{p}_1,\cdots,\Hat{p}_n )^T$, then the distributional Fourier
transform $\FvP^{\ast}=( \Hat{p}_1^{\ast}\cdots,\Hat{p}_n^{\ast})^T$
of its adjoint operator $\vP^{\ast}$ is equal to
$\overline{\FvP}=(\overline{\Hat{p}}_1,\cdots,\overline{\Hat{p}}_n)^T$.
Since
\[
\langle \widehat{P_j^{\ast}P_jT},\gamma \rangle=\langle
\hat{p}^{\ast}_j\widehat{P_jT},\gamma \rangle=\langle
\hat{p}_j\hat{T},\hat{p}^{\ast}_j\gamma \rangle=\langle
\overline{\hat{p}}_j\hat{p}_j\hat{T},\gamma \rangle=\langle
\abs{\hat{p}_j}^2\hat{T},\gamma \rangle
\]
for each $T\in\Schwartz'$ and $\gamma\in\Schwartz$, the
distributional Fourier transform $\hat{l}$ of $L$ is given by
\[
\hat{l}(\vx):=\sum_{j=1}^n\abs{\hat{p}_j(\vx)}^2=\norm{\FvP(\vx)}_2^2,\quad{}\vx\in\Rd.
\]
Moreover, since $\vP$ has a distributional Fourier transform,
$\vP$ is translation invariant (see Section~\ref{s:DistFT}).

We are now ready to state and prove our main theorem about the generalized Sobolev space $\Hp$ induced by a vector
distributional operator $\vP:=\left(P_1, \cdots ,P_n\right)^T$.

\begin{theorem}\label{t:NS-Hp}
Let $\vP:=\left(P_1, \cdots ,P_n\right)^T$ be a complex-adjoint invariant vector distributional operator with vector distributional Fourier transform $\FvP:=( \Hat{p}_1,\cdots,\Hat{p}_n)^T$ which is
nonzero on $\Rdzero$. Further
suppose that $\vx\mapsto\norm{\FvP(\vx)}_2^{-1}\in\SI$ and that
$\norm{\FvP(\vx)}_2=\Theta(\norm{\vx}_2^m)$ as $\norm{\vx}_2\to 0$
for some $m\in\NN_{0}$. If the Green function
$G\in\RealContinue\cap\SI$ with respect to $L=\vP^{\ast T}\vP$ is chosen so
that it is an even function,
then $G$ is a conditionally positive definite function of order $m$
and its native space $\NativeG$ is a subspace of the generalized
Sobolev space $\Hp$. Moreover, their semi-inner products are the
same on $\NativeG$, i.e.,
\[
(f,g)_{\NativeG}=(f,g)_{\Hp}, \quad{ }f,g\in\NativeG.
\]
\end{theorem}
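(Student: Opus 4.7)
My plan is to reduce the statement to the two earlier machine pieces that are already in place: Theorem~\ref{t:GF-CPD}, which tells us when a Green function is conditionally positive definite and supplies its generalized Fourier transform, and Theorem~\ref{t:NS-FT}, which describes the native space of such a Green function in terms of its generalized Fourier transform. The bridge between these and the generalized Sobolev space $\Hp$ is the identity
\[
\hat{l}(\vx)=\sum_{j=1}^{n}\abs{\hat{p}_j(\vx)}^{2}=\norm{\FvP(\vx)}_2^{2},
\]
derived in the paragraphs preceding the theorem.

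\textbf{Step 1 (CPD property and generalized Fourier transform of $G$).} I would first check that $\hat{l}=\norm{\FvP}_2^{2}$ satisfies the hypotheses of Theorem~\ref{t:GF-CPD}: positivity on $\Rdzero$ follows from $\FvP$ being nonzero there; $\hat{l}^{-1}=\norm{\FvP}_2^{-2}\in\SI$ follows from $\norm{\FvP}_2^{-1}\in\SI$ by squaring; and $\hat{l}(\vx)=\Theta(\norm{\vx}_2^{2m})$ as $\norm{\vx}_2\to 0$ follows from $\norm{\FvP(\vx)}_2=\Theta(\norm{\vx}_2^{m})$ by squaring. Theorem~\ref{t:GF-CPD} then delivers that $G$ is conditionally positive definite of order $m$ with generalized Fourier transform of order $m$ equal to $\genFourG(\vx)=(2\pi)^{-d/2}\norm{\FvP(\vx)}_2^{-2}$.

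\textbf{Step 2 (Native space characterization).} Plugging $\hat{\phi}=\genFourG$ into Theorem~\ref{t:NS-FT}, an element $f\in\NativeG$ is precisely a function in $\RealContinue\cap\SI$ that has a generalized Fourier transform $\hat{f}$ of order $m/2$ with $\norm{\FvP}_2\hat{f}\in\Ltwo$, and the native space semi-inner product simplifies to
\[
(f,g)_{\NativeG}=\int_{\Rd}\norm{\FvP(\vx)}_2^{2}\,\hat{f}(\vx)\overline{\hat{g}(\vx)}\,\ud\vx=\sum_{j=1}^{n}\int_{\Rd}\hat{p}_j(\vx)\hat{f}(\vx)\,\overline{\hat{p}_j(\vx)\hat{g}(\vx)}\,\ud\vx.
\]

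\textbf{Step 3 (Inclusion $\NativeG\subseteq\Hp$ and identity of inner products).} Fix $f\in\NativeG$. I would like to identify the distributional Fourier transform of $P_j f$ with $\hat{p}_j\hat{f}$ and then invoke the Plancherel isometry. By Definition~\ref{d:DistrFourier}, for any $\gamma\in\Schwartz$ one has $\langle\widehat{P_j f},\gamma\rangle=\langle\hat{f},\hat{p}_j\gamma\rangle$ where $\hat{f}$ denotes the distributional Fourier transform of $f$; the point is that because $f\in\Continue\cap\SI$ the distributional and generalized Fourier transforms of $f$ agree on $\Schwartz_{m/2}$, and one can approximate an arbitrary $\gamma\in\Schwartz$ by elements of $\Schwartz_{m/2}$ (multiplying by suitable smooth cut-offs in a neighbourhood of the origin, as in the approximation argument used in the proof of Theorem~\ref{t:GF-CPD}) so that $\widehat{P_j f}$ is represented by the locally integrable function $\hat{p}_j\hat{f}$. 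Since $\norm{\FvP}_2\hat{f}\in\Ltwo$, each $\hat{p}_j\hat{f}\in\Ltwo$, so by the Plancherel theorem $P_j f\in\Ltwo$ with $\norm{P_j f}_{\Ltwo}=\norm{\hat{p}_j\hat{f}}_{\Ltwo}$. Summing,
\[
\sum_{j=1}^{n}\norm{P_j f}_{\Ltwo}^{2}=\int_{\Rd}\norm{\FvP(\vx)}_2^{2}\,\abs{\hat{f}(\vx)}^{2}\,\ud\vx=(f,f)_{\NativeG}<\infty,
\]
showing $f\in\Hp$. Polarising (or repeating the argument with $g$ in place of one copy of $f$) gives $(f,g)_{\Hp}=(f,g)_{\NativeG}$ on $\NativeG$.

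\textbf{Main obstacle.} The routine-looking calculations hide one delicate point: the generalized Fourier transform $\hat{f}$ is only a priori defined as a locally $\Ltwo$ function on $\Rdzero$ and tested against $\Schwartz_{m/2}$, whereas to write $\widehat{P_j f}=\hat{p}_j\hat{f}$ as tempered distributions I need the equality to hold when tested against all $\gamma\in\Schwartz$, including those that do not vanish of sufficient order at the origin. Establishing this requires showing that the singularity of $\hat{f}$ at the origin is integrable when multiplied by $\hat{p}_j$, which uses the hypothesis $\norm{\FvP(\vx)}_2=\Theta(\norm{\vx}_2^{m})$ near $0$ together with $\norm{\FvP}_2\hat{f}\in\Ltwo$, and then a density/approximation argument analogous to the one already deployed in Theorem~\ref{t:GF-CPD} to pass from $\Schwartz_{m/2}$ to $\Schwartz$. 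Once this identification is secured, the rest of the proof is a direct application of Plancherel.
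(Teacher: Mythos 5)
Your overall route is the same as the paper's: verify the hypotheses of Theorem~\ref{t:GF-CPD} for $\hat{l}=\norm{\FvP(\cdot)}_2^2$, invoke Theorem~\ref{t:NS-FT} to characterize $\NativeG$ through $\genFourG$, identify $P_jf$ with the inverse $\Ltwo$-Fourier transform of $\hat{p}_j\Hat{f}$, and finish with Plancherel; Steps 1, 2 and the final isometry computation coincide with the paper's proof. The one place where you diverge is exactly the step you flag as the ``main obstacle,'' and there your proposed fix is both unnecessary and the weakest link. You suggest approximating an arbitrary $\gamma\in\Schwartz$ by test functions vanishing at the origin via smooth cut-offs, in analogy with the limiting argument in Theorem~\ref{t:GF-CPD}; but cutting $\gamma$ off near the origin does not give convergence in the Schwartz topology, so the pairings $\langle \widehat{P_jf},\gamma_\epsilon\rangle$ with the tempered distribution $\widehat{P_jf}$ need not pass to the limit, and making that argument rigorous would require extra domination estimates you have not supplied. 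The paper avoids all of this with a direct observation that is already implicit in your own obstacle paragraph: since $\hat{p}_j\in\FT$ and $\hat{p}_j(\vx)=\mathcal{O}(\norm{\vx}_2^{m})$ as $\norm{\vx}_2\to0$ (from $\norm{\FvP(\vx)}_2=\Theta(\norm{\vx}_2^m)$), multiplication by $\hat{p}_j$ maps \emph{every} $\gamma\in\Schwartz$ into the admissible test class for a generalized Fourier transform of order $m/2$, i.e.\ into functions that are $\mathcal{O}(\norm{\vx}_2^{m})$ at the origin, on which the generalized and distributional Fourier transforms of $f$ coincide. Hence, writing $f_{P_j}:=(\hat{p}_j\Hat{f})\check{}\in\Ltwo$ and using $\hat{p}_j\check{\overline{\gamma}}=\overline{\widehat{P_j^{\ast}\gamma}}$, one gets $\int_{\Rd}f_{P_j}(\vx)\overline{\gamma(\vx)}\,\ud\vx=\langle P_jf,\overline{\gamma}\rangle$ for all $\gamma\in\Schwartz$ in one line, with no density or limiting argument at all. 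So your proposal is essentially correct and essentially the paper's proof, provided you replace the cut-off/density step by this direct use of the vanishing of $\hat{p}_j$ at the origin.
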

%
\begin{proof}
By our earlier discussion the distributional Fourier transform
$\hat{l}$ of $L$ is equal to $\hat{l}(\vx)=\norm{\FvP(\vx)}_2^2$.
Thus $\hat{l}$ is positive on $\Rdzero$, $\Hat{l}^{-1}\in\SI$ and
$\Hat{l}(\vx)=\Theta(\norm{\vx}_2^{2m})$ as $\norm{\vx}_2\to 0$.
According to Theorem~\ref{t:GF-CPD}, $G$ is a conditionally
positive definite function of order $m$ and its generalized Fourier
transform of order $m$ is given by
\[
\genFourG(\vx):=(2\pi)^{-d/2}\Hat{l}(\vx)^{-1}=(2\pi)^{-d/2}\norm{\FvP(\vx)}_2^{-2},\quad{}\vx\in\Rd.
\]
With the material developed thus far we are able construct its
native space $\NativeG$ (see Section~\ref{s:NS-HS}).

Next, we fix any $f\in\NativeG$. According to Theorem~\ref{t:NS-FT},
$f\in\RealContinue\cap\SI$ possesses a generalized Fourier transform
$\Hat{f}$ of order $m/2$ and
$\vx\mapsto\Hat{f}(\vx)\norm{\FvP(\vx)}_2\in\Ltwo$. This means that
the functions $\Hat{p}_j\Hat{f}$ belong to $\Ltwo$, $j=1,\ldots,n$.
Hence we can define the function $f_{P_j} \in\Ltwo$ by
\[
f_{P_j}:=(\Hat{p}_j\Hat{f})\Check{}\in\Ltwo,\quad{}j=1,\ldots,n
\]
using the inverse $\Ltwo$-Fourier transform.

Since $\norm{\FvP(\vx)}_2=\Theta(\norm{\vx}_2^m)$ as
$\norm{\vx}_2\to 0$ we have
$\hat{p}_j(\vx)=\mathcal{O}(\norm{\vx}_2^m)$ as $\norm{\vx}_2\to 0$
for each $j=1,\ldots,n$. Thus
$\Hat{p}_j\check{\overline{\gamma}}\in\mathcal{S}_m$ for each
$\gamma\in\Schwartz$. Moreover, since
$\Hat{p}_j\check{\overline{\gamma}}=\Hat{p}_j\overline{\Hat{\gamma}}
=\overline{\Hat{p}_j^{\ast}\Hat{\gamma}}
=\overline{\widehat{P_j^{\ast}\gamma}}$ and the generalized and distributional Fourier transforms of $f$ coincide
on $\Schwartz_m$ we have
\[
\begin{split}
&\int_{\Rd}f_{P_j}(\vx)\overline{\gamma(\vx)}\ud\vx=\int_{\Rd}(\Hat{p}_j\Hat{f})\Check{}(\vx)\overline{\gamma(\vx)}\ud\vx
=\int_{\Rd}(\Hat{p}_j\Hat{f})(\vx)\check{\overline{\gamma}}(\vx)\ud\vx\\
=&\langle
\Hat{f},\Hat{p}_j\check{\overline{\gamma}} \rangle
=\langle \Hat{f},\overline{\widehat{P_j^{\ast}\gamma}}\rangle
=\langle f,\overline{P_j^{\ast}\gamma}\rangle  =\langle
f,P_j^{\ast}\overline{\gamma}\rangle =\langle
P_jf,\overline{\gamma}\rangle,\quad{}\gamma\in\Schwartz.
\end{split}
\]

This shows that $P_jf=f_{P_j}\in\Ltwo$. Therefore we know that
$f\in\Hp$.

To establish equality of the semi-inner products we let
$f,g\in\NativeG$. Then the Plancherel theorem~\cite{SteWei75} yields
\[
\begin{split}
(f,g)_{\Hp}&=\sum_{j=1}^{n}\int_{\Rd}f_{P_j}(\vx)\overline{g_{P_j}(\vx)}\ud\vx
=\sum_{j=1}^{n}\int_{\Rd}(\Hat{p}_j\Hat{f})(\vx)\overline{(\Hat{p}_j\Hat{g})(\vx)}\ud\vx\\
&=\int_{\Rd}\Hat{f}(\vx)\overline{\Hat{g}(\vx)}\norm{\FvP(\vx)}_2^2\ud\vx
=\int_{\Rd}\Hat{f}(\vx)\overline{\Hat{g}(\vx)}\Hat{l}(\vx)\ud\vx\\
&=(2\pi)^{-d/2}\int_{\Rd}\frac{\Hat{f}(\vx)\overline{\Hat{g}(\vx)}}{\genFourG(\vx)}\ud\vx
=(f,g)_{\NativeG}.
\end{split}
\]
\qed
\end{proof}
%
\begin{remark}
If each element of $\vP$ is just a differential operator then all their coefficients are real numbers because $\vP$ is complex-adjoint
invariant.
\end{remark}
%
The preceding theorem shows that $\NativeG$ can be isometrically embedded into
$\Hp$. Ideally, $\NativeG$ would be equal to $\Hp$, but this is not true in general.
However, if we impose some additional
conditions on $\Hp$, then we can obtain equality.
\begin{definition}\label{d:SchwartzDense}
Let $\vP:=\left(P_1,\cdots,P_n\right)^T$ be a vector distributional
operator. We say that the generalized Sobolev space $\Hp$ possesses
the \emph{$\Schwartz$-dense} property if for every $f\in\Hp$, every
compact subset $\Lambda\subset\Rd$ and every $\epsilon>0$, there
exists $\gamma\in\Schwartz\cap\Hp$ such that
\begin{equation}\label{Sdense}
\abs{f-\gamma}_{\Hp}<\epsilon \text{ and }
\norm{f-\gamma}_{\Leb_{\infty}(\Lambda)}<\epsilon,
\end{equation}
i.e., there is a sequence
$\left\{\gamma_n\right\}_{n=1}^{\infty}\subseteq\Schwartz\cap\Hp$
so that
\[
\abs{f-\gamma_n}_{\Hp}\rightarrow0 \text{ and }
\norm{f-\gamma_n}_{\Leb_{\infty}(\Lambda)}\rightarrow0,\text{ when
}n\rightarrow\infty.
\]
\end{definition}

Following the method of the proofs of \cite[Theorems~10.41 and
10.43]{Wen05}, we can complete the proofs of the following lemma and
theorem.
\begin{LEMMA}\label{l:flambda}
Let $\vP$ and $G$ satisfy the conditions of Theorem~\ref{t:NS-Hp}
and suppose that $\Hp$ has the $\Schwartz$-dense property. Assume we
are given the data sites $\{\vx_1,\cdots,\vx_N\}\subset\Rd$ and scalars
$\{\lambda_1,\cdots,\lambda_N \}\subset\RR$. If we define
$f_{\lambda}:=\sum_{k=1}^N\lambda_k G(\cdot-\vx_k)$, then for every
$f\in\Hp$ and every $\vx\in\Rd$ we have the representation
\begin{equation}\label{flambda}
(f_{\lambda}(\vx-\cdot),f)_{\Hp}=\sum_{k=1}^N\lambda_k f(\vx-\vx_k).
\end{equation}
\end{LEMMA}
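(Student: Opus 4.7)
The plan is to first verify~(\ref{flambda}) when $f$ is a real-valued Schwartz function by invoking the Green-function equation $LG=\delta_0$ together with the adjoint and translation-invariance properties of $\vP$, and then to pass to arbitrary $f\in\Hp$ through the $\Schwartz$-dense hypothesis combined with a Cauchy--Schwarz estimate in the seminorm $|\cdot|_\Hp$.

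For the Schwartz case I would fix any real $\phi\in\Schwartz$. Since each $P_j$ maps $\Schwartz$ into $\Schwartz$, $L\phi=\sum_{j=1}^n P_j^{\ast}P_j\phi$ again lies in $\Schwartz$, and the adjoint relations of Definition~\ref{d:DistrAdjoint} together with $LG=\delta_0$ yield
$$\sum_{j=1}^{n}\langle P_jG,P_j\phi\rangle=\sum_{j=1}^{n}\langle G,P_j^{\ast}P_j\phi\rangle=\langle G,L\phi\rangle=\langle LG,\phi\rangle=\phi(0).$$
Because $\vP$ has a distributional Fourier transform it is translation invariant (Section~\ref{s:DistFT}), so applying the above identity with $\phi=\tau_{-\vy_0}\gamma$, substituting $\vz=\vy-\vy_0$, and using the evenness of $G$ yields $(G(\cdot-\vy_0),\gamma)_\Hp=\gamma(\vy_0)$ for every real Schwartz $\gamma$ and $\vy_0\in\Rd$. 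Writing $f_\lambda(\vx-\cdot)=\sum_{k=1}^N\lambda_kG(\cdot-(\vx-\vx_k))$ (by evenness once more) and taking linear combinations, (\ref{flambda}) follows for every $\gamma\in\Schwartz\cap\Hp$.

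For an arbitrary $f\in\Hp$, I would invoke the $\Schwartz$-dense property with the compact set $\Lambda:=\{\vx-\vx_1,\ldots,\vx-\vx_N\}$ to produce $\{\gamma_\ell\}\subset\Schwartz\cap\Hp$ satisfying $|f-\gamma_\ell|_\Hp\to 0$ and $\gamma_\ell\to f$ uniformly on $\Lambda$. Uniform convergence on $\Lambda$ gives $\sum_k\lambda_k\gamma_\ell(\vx-\vx_k)\to\sum_k\lambda_kf(\vx-\vx_k)$; for the other side, Cauchy--Schwarz for the semi-inner product gives
$$\bigl|(f_\lambda(\vx-\cdot),\gamma_\ell-f)_\Hp\bigr|\leq|f_\lambda(\vx-\cdot)|_\Hp\cdot|\gamma_\ell-f|_\Hp\longrightarrow 0,$$
so passing to the limit in the Schwartz identity yields (\ref{flambda}).

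The delicate point, and the main obstacle, is justifying that $|f_\lambda(\vx-\cdot)|_\Hp<\infty$ so that the Cauchy--Schwarz estimate is non-vacuous. On the Fourier side, $\widehat{P_jG}=(2\pi)^{-d/2}\hat{p}_j/\hat{l}$ behaves like $\|\vxi\|_2^{-m}$ near the origin; the translate factor $\sum_k\lambda_k e^{-i\vxi^T(\vx-\vx_k)}$ must cancel this singularity to give square integrability, which holds precisely when the coefficients $\lambda_k$ satisfy the polynomial moment conditions $\sum_k\lambda_kq(\vx_k)=0$ for all $q\in\pim$ that are implicit in the use of Lemma~\ref{l:flambda} in the framework of conditionally positive definite interpolation (Section~\ref{s:CPD-NS}). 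Under these moment conditions, Theorem~\ref{t:NS-Hp} places $f_\lambda\in\NativeG\subseteq\Hp$, so the seminorm is finite and the density argument closes.
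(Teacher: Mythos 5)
Your proposal follows essentially the same route as the paper's proof: the identity is first verified for test functions in $\Schwartz\cap\Hp$ by combining $LG=\delta_0$ with the adjoint relations and the translation invariance of $\vP$ (the paper computes directly with $f_\lambda$ rather than first establishing $(G(\cdot-\vy_0),\gamma)_{\Hp}=\gamma(\vy_0)$ and summing, but that is cosmetic), and the general case is then obtained from the $\Schwartz$-dense property exactly as you describe --- uniform convergence on a compact set containing the points $\vx-\vx_k$ for one side, and the Cauchy--Schwarz estimate $\abs{(f_\lambda(\vx-\cdot),f-\gamma)_{\Hp}}\leqslant\abs{f_\lambda}_{\Hp}\abs{f-\gamma}_{\Hp}$ for the other; the paper's bound $\epsilon\left(\sum_{k}\abs{\lambda_k}+\abs{f_\lambda}_{\Hp}\right)$ is the same estimate.

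The one point of divergence is how the finiteness of $\abs{f_\lambda(\vx-\cdot)}_{\Hp}$ is justified. The paper disposes of it in one line, asserting $f_\lambda\in\NativeG\subseteq\Hp$ ``according to Theorem~\ref{t:NS-Hp}'', with no restriction on the scalars $\lambda_k$; you instead impose the moment conditions $\sum_k\lambda_k q(\vx_k)=0$ for all $q\in\pim$ and call them implicit. Strictly speaking, you have therefore proved a restricted version of the lemma as stated, which allows arbitrary real scalars, and if you take this route the moment conditions should appear as an explicit hypothesis rather than as something read into the statement. That said, your concern is substantive rather than pedantic: for $m>0$ a single translate of $G$ need not belong to $\NativeG$ or to $\Hp$ (for the thin plate spline of Example~\ref{e:TPS}, $P_jG$ grows like $\log\norm{\cdot}_2$ and is not in $\Ltwo$), so the paper's one-line appeal to Theorem~\ref{t:NS-Hp} is itself only safe when the coefficient vector annihilates $\pim$. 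In the lemma's sole application, the proof of Theorem~\ref{t:NS=Hp}, the coefficients $\{1,-q_1(\vx),\cdots,-q_Q(\vx)\}$ at the sites $\{-\vx,-\vxi_1,\cdots,-\vxi_Q\}$ do annihilate $\pim$ (since $\vy\mapsto p(-\vy)\in\pim$ is reproduced by the Lagrange basis), so your restricted version suffices for the purpose the lemma serves; in this respect your treatment is more careful than the printed proof, at the cost of not covering the lemma's literal generality.
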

%
\begin{proof}
Let us first assume that $\gamma\in\Schwartz\cap\Hp$. According to
Theorem~\ref{t:NS-Hp}, $f_{\lambda}\in\NativeG\subseteq\Hp$. Since
$\vP$ is translation invariant and complex-adjoint invariant we
have
\[
\begin{split}
&(f_{\lambda}(\vx-\cdot),\gamma)_{\Hp}=\sum_{j=1}^{n}\int_{\Rd}P_{j,\vy}f_{\lambda}(\vx-\vy)\overline{P_j\gamma(\vy)}\ud\vy
=\sum_{j=1}^{n}\int_{\Rd}P_{j,\vy}f_{\lambda}(\vx-\vy)P_j\gamma(\vy)\ud\vy\\
=&\sum_{j=1}^{n}\langle f_{\lambda}(\vx-\cdot),P_j^{\ast}P_j\gamma
\rangle = \int_{\Rd} f_{\lambda}(\vy)L_{\vy}\gamma(\vx-\vy)\ud\vy
=\sum_{k=1}^N\int_{\Rd}\lambda_k G(\vy-\vx_k)L_{\vy}\gamma(\vx-\vy)\ud\vy \\
=&\sum_{k=1}^N\lambda_k\langle LG,\gamma(\vx-\vx_k-\cdot)\rangle
=\sum_{k=1}^N\lambda_k\langle
\delta_0,\gamma(\vx-\vx_k-\cdot)\rangle
=\sum_{k=1}^N\lambda_k\gamma(\vx-\vx_k).
\end{split}
\]

For a general $f\in\Hp$ we fix $\vx\in\Rd$ and choose a compact set
$\Lambda\subset\Rd$ such that $\vx-\vx_k\in\Lambda$ for
$k=1,\ldots,N$. For any $\epsilon>0$, there is a
$\gamma\in\Schwartz\cap\Hp$ which satisfies Equation~(\ref{Sdense}).
Then two applications of the triangle inequality show that the
absolute value of the difference in the two sides of
Equation~(\ref{flambda}) can be bounded by
$\epsilon\left(\sum_{k=1}^N\abs{\lambda_k}+\abs{f_{\lambda}}_{\Hp}\right)$,
which tends to zero as $\epsilon\to 0$.
\qed
\end{proof}

%
\begin{theorem}\label{t:NS=Hp}
Let $\vP$ and $G$ satisfy the conditions of Theorem~\ref{t:NS-Hp}.
If $\Hp$ possesses the $\Schwartz$-dense property, then
\[
\NativeG\equiv\Hp.
\]
\end{theorem}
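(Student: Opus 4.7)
The forward inclusion $\NativeG \subseteq \Hp$ with equality of semi-inner products is already Theorem~\ref{t:NS-Hp}, so only the reverse inclusion $\Hp \subseteq \NativeG$ remains. My plan is a three-stage approximation argument: first place Schwartz approximants inside $\NativeG$, then use completeness of the native space to produce a candidate limit, and finally identify that limit with $f$ up to a polynomial in the null space $\pim$.

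\emph{Step 1 (Schwartz approximants already live in $\NativeG$).} I would show that $\Schwartz \cap \Hp \subseteq \NativeG$ via the Fourier characterization in Theorem~\ref{t:NS-FT}. For $\gamma \in \Schwartz$, the ordinary Fourier transform $\hat{\gamma}$ is itself Schwartz and serves as a generalized Fourier transform of $\gamma$ of any order. Since $\hat{l} \in \FT$ grows at most polynomially and satisfies $\hat{l}(\vx) = \Theta(\|\vx\|_2^{2m})$ near the origin, the product $\genFourG^{-1/2}\hat{\gamma}$ is bounded near $0$ and rapidly decreasing at infinity, hence in $\Ltwo$, which places $\gamma$ in $\NativeG$.

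\emph{Step 2 (completeness produces a limit).} Fix $f \in \Hp$ and choose a compact $\Lambda \subset \Rd$ large enough to contain the $\pim$-unisolvent set $\{\vxi_1,\ldots,\vxi_Q\}$ from Section~\ref{s:NS-HS}. The $\Schwartz$-dense hypothesis furnishes $\{\gamma_n\} \subseteq \Schwartz \cap \Hp$ with $|f - \gamma_n|_{\Hp} \to 0$ and $\|f - \gamma_n\|_{\Leb_{\infty}(\Lambda)} \to 0$. By the triangle inequality $\{\gamma_n\}$ is $|\cdot|_{\Hp}$-Cauchy, and Theorem~\ref{t:NS-Hp} identifies $|\cdot|_{\Hp}$ with $|\cdot|_{\NativeG}$ on $\NativeG$, so $\{\gamma_n\}$ is also $|\cdot|_{\NativeG}$-Cauchy. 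Completeness of $\NativeG$ as a semi-inner product space then produces $g^{\ast} \in \NativeG$ with $|\gamma_n - g^{\ast}|_{\NativeG} \to 0$.

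\emph{Step 3 (the crux: identification modulo $\pim$).} This is the main obstacle, because the native-space semi-norm is blind to $\pim$ and cannot by itself distinguish $g^{\ast}$ from $f$. Let $\{q_1,\ldots,q_Q\}$ be the Lagrange basis of $\pim$ dual to $\{\vxi_k\}$ and set $\tilde{\gamma}_n := \gamma_n - \sum_{k=1}^{Q}\gamma_n(\vxi_k)q_k$, which differs from $\gamma_n$ by a polynomial and vanishes at every $\vxi_k$. On the subspace of $\NativeG$ consisting of functions vanishing on the unisolvent set, the full RKHS norm $\|\cdot\|_K$ from Section~\ref{s:NS-HS} reduces to the semi-norm, so $\{\tilde{\gamma}_n\}$ is $\|\cdot\|_K$-Cauchy and therefore converges in $\|\cdot\|_K$ — and hence pointwise, by the reproducing property — to some $\tilde{g} \in \NativeG$. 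Meanwhile the $\Leb_{\infty}(\Lambda)$ convergence yields the pointwise limit $\tilde{\gamma}_n(\vx) \to f(\vx) - \sum_{k=1}^{Q} f(\vxi_k)q_k(\vx)$ (which incidentally endows $f$ with a continuous representative, so the native-space characterization requirement $f\in\RealContinue\cap\SI$ is met). Equating the two pointwise limits gives $f = \tilde{g} + \sum_{k=1}^{Q} f(\vxi_k)q_k$, and since $\tilde{g}\in\NativeG$ and $\pim \subseteq \NativeG$ lies in the null space, we conclude $f \in \NativeG$, finishing the proof.
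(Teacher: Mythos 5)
Your route is correct in outline but genuinely different from the paper's. The paper proves $\Hp\subseteq\NativeG$ by contradiction: it first establishes the representation formula of Lemma~\ref{l:flambda} (whose extension from $\Schwartz\cap\Hp$ to all of $\Hp$ is exactly where the $\Schwartz$-dense property enters), then assumes there is an $f\in\Hp$ orthogonal to the complete subspace $\NativeG$ and uses that formula with $f_{\lambda}:=G(\cdot+\vx)-\sum_k q_k(\vx)G(\cdot+\vxi_k)$ to force $f\in\pim\subseteq\NativeG$, a contradiction. You avoid Lemma~\ref{l:flambda} and orthogonality altogether: you put the Schwartz approximants into $\NativeG$ via the Fourier characterization of Theorem~\ref{t:NS-FT} (a step the paper never needs, and which is legitimate since $\norm{\FvP}_2$ has polynomial growth and $\hat{\gamma}$ decays rapidly), transfer Cauchyness through the isometry of Theorem~\ref{t:NS-Hp}, and upgrade semi-norm convergence to convergence in the reproducing-kernel norm by subtracting the $\pim$-interpolant on the unisolvent set — a clean way to handle the null-space ambiguity. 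Your approach buys a constructive identification of $f$ modulo $\pim$ instead of the paper's indirect projection argument, at the price of invoking the native-space Fourier characterization and RKHS completeness.

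There is, however, one genuine (though repairable) gap in Step 3: your approximating sequence is attached to a single compact set $\Lambda$, so the uniform convergence only gives $\tilde{\gamma}_n(\vx)\to f(\vx)-\sum_{k=1}^{Q}f(\vxi_k)q_k(\vx)$ for $\vx\in\Lambda$, while the $\Hilbert_K(\Rd)$-limit $\tilde{g}$ is a function on all of $\Rd$. As written you therefore conclude $f=\tilde{g}+\sum_k f(\vxi_k)q_k$ only on $\Lambda$, which neither places $f$ in $\NativeG$ nor provides a globally continuous representative, since $f$ could differ from $\tilde{g}$ plus that polynomial outside $\Lambda$. The repair is routine: apply Definition~\ref{d:SchwartzDense} with $\Lambda_n$ an exhausting sequence of compact sets containing the $\vxi_k$ and $\epsilon_n=1/n$, producing one sequence with $\abs{f-\gamma_n}_{\Hp}\to0$ and $\gamma_n\to f$ locally uniformly; alternatively, observe that interleaving the sequences associated with two compact sets is still Cauchy in the $\Hilbert_K(\Rd)$-norm after the polynomial correction, so $\tilde{g}$ does not depend on $\Lambda$, and evaluating at an arbitrary $\vx$ with a compact set containing it yields the identity on all of $\Rd$. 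With that adjustment your argument goes through.
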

%
\begin{proof}
By Theorem~\ref{t:NS-Hp} we already know that $\NativeG$ is
contained in $\Hp$ and that their semi-inner products are the same
in the subspace $\NativeG$. Moreover, $\NativeG$ is a complete
subspace of $\Hp$. So, if we assume that $\NativeG$ were not the
whole space $\Hp$, then there would be an element $f\in\Hp$ which is
orthogonal to the native space $\NativeG$.

Let $Q=\dim\pi_{m-1}(\Rd)$ and $\{q_1,\cdots,q_Q\}$ be a Lagrange
basis of $\pi_{m-1}(\Rd)$ with respect to a
$\pi_{m-1}(\Rd)$-unisolvent subset
$\{\vxi_1,\cdots,\vxi_Q\}\subset\Rd$. We make the special choice of
the data sites $\{-\vx,-\vxi_1,\cdots,-\vxi_Q\}$ and scalars
$\left\{1,-q_1(\vx),\cdots,-q_Q(\vx)\right\}$ and correspondingly define
\[
f_{\lambda}:=G(\cdot+\vx)-\sum_{k=1}^{Q}q_k(\vx)G(\cdot+\vxi_k).
\]
Since $\Hp$ has the $\Schwartz$-dense property we can use
Lemma~\ref{l:flambda} to represent any $f\in\Hp$ in the form
\[
f(\vw+\vx)=\sum_{k=1}^Q
q_k(\vx)f(\vw+\xi_k)+(f_{\lambda}(\vw-\cdot),f)_{\Hp}.
\]
Since $G$ is even, we have $\vx\mapsto
f_{\lambda}(-\vx)\in\NativeG$. We now set $\vw=0$. The fact that $f$
is orthogonal to $\NativeG$ gives us
\[
f(\vx)=\sum_{k=1}^Q q_k(\vx)f(\xi_k)+(f_{\lambda}(-\cdot),f)_{\Hp}
=\sum_{k=1}^Q f(\xi_k)q_k(\vx).
\]
This shows that $f\in\pi_{m-1}(\Rd)\subseteq\NativeG$, and it
contradicts our first assumption. It follows that
$\NativeG\equiv\Hp$.
\qed
\end{proof}

%
\begin{LEMMA}\label{l:Hpzero}
Let $\vP$ and $G$ satisfy the conditions of Theorem~\ref{t:NS-Hp}.
Then
\[
\Hp\cap\Continue\cap\Ltwo\subseteq\NativeG.
\]
\end{LEMMA}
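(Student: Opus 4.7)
The plan is to use the generalized Fourier transform characterization of $\NativeG$ from Theorem~\ref{t:NS-FT} together with the fact that $\genFourG(\vx) = (2\pi)^{-d/2}\norm{\FvP(\vx)}_2^{-2}$ is the generalized Fourier transform of order $m$ of $G$, as established in the proof of Theorem~\ref{t:NS-Hp}. Given $f \in \Hp \cap \Continue \cap \Ltwo$, I first note that $f \in \RealContinue \cap \SI$ because membership in $\Hp$ already gives $f \in \RealLloc \cap \SI$, and continuity is assumed. Since $f \in \Ltwo$, its $\Ltwo$-Fourier transform $\Hat{f}$ exists; the standard duality $\int f(\vx)\hat{\gamma}(\vx)\ud\vx = \int \Hat{f}(\vx)\gamma(\vx)\ud\vx$ for all $\gamma \in \Schwartz$ (in particular for $\gamma \in \mathcal{S}_m$) shows that $\Hat{f}$ serves as a generalized Fourier transform of $f$ of any order, in particular of order $m/2$.

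The next step is to identify $\widehat{P_j f}$ with $\Hat{p}_j \Hat{f}$ in the $\Ltwo$ sense. By the definition of the distributional Fourier transform of $P_j$, the distributional identity $\widehat{P_j f} = \Hat{p}_j \Hat{f}$ holds in $\Schwartz'$. But since $f \in \Hp$ we have $P_j f \in \Ltwo$, so its distributional Fourier transform coincides with its $\Ltwo$-Fourier transform and is in particular an $\Ltwo$-function. Hence $\Hat{p}_j \Hat{f}$, a priori only a tempered distribution, is actually a locally integrable function in $\Ltwo$, and Plancherel gives
\[
\norm{P_j f}_{\Ltwo}^2 = \norm{\Hat{p}_j \Hat{f}}_{\Ltwo}^2 = \int_{\Rd}\abs{\Hat{p}_j(\vx)}^2\abs{\Hat{f}(\vx)}^2\ud\vx.
\]

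Summing over the finitely many nonzero $P_j$ and using $\sum_{j=1}^{n}\abs{\Hat{p}_j(\vx)}^2 = \norm{\FvP(\vx)}_2^2$, I obtain
\[
\int_{\Rd}\norm{\FvP(\vx)}_2^{2}\,\abs{\Hat{f}(\vx)}^2\ud\vx = \sum_{j=1}^{n}\norm{P_j f}_{\Ltwo}^2 < \infty,
\]
which is, up to the constant $(2\pi)^{d/2}$, precisely $\int_{\Rd}\genFourG(\vx)^{-1}\abs{\Hat{f}(\vx)}^2\ud\vx < \infty$. Thus $\genFourG^{-1/2}\Hat{f} \in \Ltwo$, and Theorem~\ref{t:NS-FT} yields $f \in \NativeG$, finishing the proof.

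The only delicate point is the bridge in the second paragraph: that the distributional identity $\widehat{P_j f} = \Hat{p}_j \Hat{f}$ can be read as an $\Ltwo$-identity. This is where the assumption $f \in \Ltwo$ (and hence $P_j f \in \Ltwo$) is essential; without it, $\Hat{p}_j\Hat{f}$ would only be a distribution, and Plancherel would not apply directly. Everything else is then a direct reinterpretation of $f \in \Hp$ through the Fourier side, matched against the native-space norm formula from Theorem~\ref{t:NS-FT}.
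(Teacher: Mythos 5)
Your proposal is correct and follows essentially the same route as the paper's own proof: compute $\norm{P_jf}_{\Ltwo}$ via Plancherel as $\norm{\Hat{p}_j\Hat{f}}_{\Ltwo}$, sum over $j$ to conclude $\genFourG^{-1/2}\Hat{f}\in\Ltwo$, and invoke the native-space characterization (Theorem~\ref{t:NS-FT}). The only difference is that you spell out the justification that $\widehat{P_jf}=\Hat{p}_j\Hat{f}$ as $\Ltwo$-functions and that the $\Ltwo$-Fourier transform of $f$ serves as a generalized Fourier transform of order $m/2$, steps the paper leaves implicit.
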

%
\begin{proof}

We fix any $f\in\Hp\cap\Continue\cap\Ltwo$ and suppose that $\hat{f}$ and
$\widehat{P_jf}$, respectively, are the $\Ltwo$-Fourier transforms of
$f$ and $P_jf$, $j=1,\ldots,n$. Using the Plancherel theorem
\cite{SteWei75} we obtain
\[
\int_{\Rd}(\Hat{p}_j\Hat{f})(\vx)\overline{(\Hat{p}_j\Hat{f})(\vx)}\ud\vx
=\int_{\Rd}\widehat{P_jf}(\vx)\overline{\widehat{P_jf}(\vx)}\ud\vx=\int_{\Rd}P_jf(\vx)\overline{P_jf(\vx)}\ud\vx<\infty.
\]
And therefore, with the help of the proof of Theorem~\ref{t:NS-Hp},
we have
\[
\begin{split}
\int_{\Rd}\frac{\abs{\Hat{f}(\vx)}^2}{\genFourG(\vx)}\ud\vx
&=(2\pi)^{d/2}\int_{\Rd}\abs{\Hat{f}(\vx)}^2\Hat{l}(\vx)\ud\vx
=(2\pi)^{d/2}\int_{\Rd}\abs{\Hat{f}(\vx)}^2\norm{\FvP(\vx)}_2^2\ud\vx
\\&=(2\pi)^{d/2}\sum_{j=1}^n\int_{\Rd}\abs{\Hat{f}(\vx)\Hat{p}_j(\vx)}^2\ud\vx
<\infty
\end{split}
\]
showing that $\genFourG^{-1/2}\Hat{f}\in\Ltwo$, where
$\genFourG$ is the generalized Fourier transform of $G$. And
now, according to Theorem~\ref{t:CPD-FT}, $f\in\NativeG$.
\qed
\end{proof}

This says that $\Hp\cap\Continue\cap\Ltwo$ can be isometrically embedded into
$\NativeG$. Moreover, according to Lemma~\ref{l:Hpzero} we can
immediately obtain the following theorem.

\begin{theorem}\label{t:NS=Hp0}
Let $\vP$ and $G$ satisfy the conditions of Theorem~\ref{t:NS-Hp}.
If $\Hp\subseteq\Ltwo$, then $G$ is positive definite and
\[
\NativeG\equiv\Hp.
\]
(It also indicates that $m=0$.)
\end{theorem}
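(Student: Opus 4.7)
The plan is to combine Theorem~\ref{t:NS-Hp} (which already gives $\NativeG\subseteq\Hp$ isometrically on $\NativeG$) with Lemma~\ref{l:Hpzero} (the reverse-type inclusion $\Hp\cap\Continue\cap\Ltwo\subseteq\NativeG$). The extra hypothesis $\Hp\subseteq\Ltwo$ will be used twice: once to collapse the polynomial order down to $m=0$, and once to eliminate the $\Ltwo$ and $\Continue$ qualifiers appearing in Lemma~\ref{l:Hpzero}.

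First I would deduce $m=0$. Recall from Section~\ref{s:NS-HS} that $\pim\subseteq\NativeG$ is exactly the null space of the native semi-inner product. Theorem~\ref{t:NS-Hp} chains this into $\pim\subseteq\NativeG\subseteq\Hp$, and the hypothesis $\Hp\subseteq\Ltwo$ then forces $\pim\subseteq\Ltwo$. Since the only polynomial belonging to $\Ltwo(\Rd)$ is the zero polynomial, we must have $\pim=\{0\}$ and hence $m=0$. With $m=0$ the conditional positive definiteness supplied by Theorem~\ref{t:GF-CPD} becomes ordinary positive definiteness (using the convention $\pi_{-1}(\Rd):=\{0\}$), and consequently $(\cdot,\cdot)_{\NativeG}$ is a genuine inner product and $K(\vx,\vy)=G(\vx-\vy)$ is the reproducing kernel of $\NativeG$.

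For the space equality, only $\Hp\subseteq\NativeG$ remains. Given any $f\in\Hp$ we have $f\in\Ltwo$ by hypothesis and $P_jf\in\Ltwo$ for every $j$, so the Plancherel computation already carried out inside the proof of Lemma~\ref{l:Hpzero} yields $\genFourG^{-1/2}\hat f\in\Ltwo$. Since $f\in\Ltwo$, its $\Ltwo$-Fourier transform coincides with its generalized Fourier transform of order $0$, so this is exactly the Fourier-side membership criterion for $\NativeG$ furnished by Theorem~\ref{t:NS-FT}. The main obstacle I anticipate is the cosmetic but nontrivial discrepancy that elements of $\NativeG$ are literal continuous functions in $\RealContinue\cap\SI$, whereas elements of $\Hp$ are $\Lloc$-equivalence classes: to invoke Lemma~\ref{l:Hpzero} verbatim one needs a continuous representative of $f$. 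Such a representative is produced canonically by the reproducing formula $f(\vx)=(K(\cdot,\vx),f)_{\NativeG}$, and it must agree a.e.\ with the original $f$ because their Fourier transforms coincide. Once that identification is made rigorously, the theorem reduces to a short assembly of Theorem~\ref{t:NS-Hp}, Lemma~\ref{l:Hpzero}, and the $m=0$ step above, and the isometry $\NativeG\equiv\Hp$ follows from the equality of semi-inner products already recorded in Theorem~\ref{t:NS-Hp}.
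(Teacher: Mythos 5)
Your reduction to $m=0$ is fine and is even a slightly different route from the paper's: the paper derives $m=0$ by noting $G\in\NativeG\subseteq\Hp\subseteq\Ltwo$, so that $\genFourG=(2\pi)^{-d/2}\norm{\FvP}_2^{-2}$ coincides with the $\Ltwo$-Fourier transform of $G$ and lies in $\Lone\cap\Ltwo$, forcing $\norm{\FvP(\vx)}_2=\Theta(1)$ as $\norm{\vx}_2\to0$; your polynomial argument reaches the same conclusion. The genuine gap is in the second half. The continuity discrepancy you call ``cosmetic'' is in fact the substance of the proof, and your patch for it is circular: the reproducing formula $f(\vx)=(K(\cdot,\vx),f)_{\NativeG}$ is only available once you already know $f\in\NativeG$, which is exactly what you are trying to establish, and no reproducing property is known for the semi-inner product of $\Hp$ itself (that is what the $\Schwartz$-dense hypothesis buys via Lemma~\ref{l:flambda} in Theorem~\ref{t:NS=Hp}, and it is not assumed here). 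The weighted bound $\genFourG^{-1/2}\hat f\in\Ltwo$ alone does not produce a continuous representative of $f$, and Theorem~\ref{t:NS-FT} as well as Lemma~\ref{l:Hpzero} require one.

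What is missing is precisely the step the paper performs: from $G\in\Ltwo$ one gets $\genFourG\in\Lone\cap\Ltwo$, i.e.\ $\vx\mapsto\norm{\FvP(\vx)}_2^{-1}\in\Ltwo$; then for any $f\in\Hp\subseteq\Ltwo$ the Cauchy--Schwarz estimate
\[
\int_{\Rd}\abs{\hat f(\vx)}\ud\vx\le\left(\int_{\Rd}\norm{\FvP(\vx)}_2^{2}\abs{\hat f(\vx)}^{2}\ud\vx\right)^{1/2}\left(\int_{\Rd}\norm{\FvP(\vx)}_2^{-2}\ud\vx\right)^{1/2}<\infty
\]
shows $\hat f\in\Lone$, so Fourier inversion identifies $f$ almost everywhere with a continuous function; only after this does $f\in\Hp\cap\Continue\cap\Ltwo$ hold, so that Lemma~\ref{l:Hpzero} gives $\Hp\subseteq\NativeG$, with the isometry supplied by Theorem~\ref{t:NS-Hp}. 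Without establishing $\norm{\FvP}_2^{-2}\in\Lone$ and then $\hat f\in\Lone$, your argument does not close.
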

\begin{proof}
Since $G\in\NativeG\subseteq\Hp\subseteq\Ltwo$, its generalized Fourier transform of any order is equal to its $\Ltwo$-Fourier transform which implies that $\genFourG\in\Ltwo\cap\Lone$. So $\vx\mapsto\norm{\FvP(\vx)}_2^{-1}\in\Ltwo$ and $\norm{\FvP(\vx)}_2=\Theta(1)$ as $\norm{\vx}_2\to 0$. According to Theorem~\ref{t:NS-Hp}, $G$ is a positive definite function.

We fix any $f\in\Hp\subseteq\Ltwo$. According to the proof of Lemma~\ref{l:Hpzero}, we have its distributional Fourier transform $\hat{f}\in\Leb_2(\Rd)$ and
\[
\norm{f}_{\Hp}^2=\sum_{j=1}^n\int_{\Rd}\abs{\widehat{P_jf}(\vx)}^2\ud\vx=\sum_{j=1}^n\int_{\Rd}\abs{\hat{p}_j(\vx)\hat{f}(\vx)}^2\ud\vx
=\int_{\Rd}\norm{\FvP(\vx)}_2^2\abs{\hat{f}(\vx)}^2\ud\vx.
\]
This means in particular that $\hat{f}\in\Leb_1(\Rd)$ because
\[
\int_{\Rd}\abs{\hat{f}(\vx)}\ud\vx\leq\left(\int_{\Rd}\norm{\FvP(\vx)}_2^2\abs{\hat{f}(\vx)}^2\right)^{1/2}
\left(\int_{\Rd}\norm{\FvP(\vx)}_2^{-2}\right)^{1/2}.
\]
Thus, the inverse $\Lone$-Fourier transform of $\hat{f}$ is equal to the inverse $\Ltwo$-Fourier transform of $\hat{f}$ which can be identified with $f$. This implies that $f\in\Cont(\Rd)$. According to Theorem~\ref{t:NS-Hp} and Lemma~\ref{l:Hpzero}, we have $\NativeG\equiv\Hp$.
\qed
\end{proof}
%
\begin{remark}
As Example~\ref{e:Laplacian} in Section~\ref{s:2D3D} shows, the native space $\NativeG$ will not always be
equal to the corresponding generalized Sobolev space $\Hp$.
\end{remark}
%
%


\section{Examples of Green Functions and their Related Generalized Sobolev Spaces}\label{s:examples}
\subsection{One-Dimensional Cases}

\begin{example}[Cubic Splines]\label{e:spline}
Consider the (scalar) distributional operator $\vP:=\ud^2/\ud x^2$ and
$L:=\vP^{\ast T}\vP=\ud^4/\ud x^4$. By integrating Equation
(\ref{Green}) four times we can obtain a family of possible Green
functions with respect to $L$, i.e.,
\[
G(x):=\frac{\abs{x}^3}{12}+a_3x^3+a_2x^2+a_1x+a_0,\quad{}x\in\RR,
\]
where $a_j\in\RR$, $j=0,1,2,3$. However, we want the Green function
to be an even function. Hence, we choose
\[
G(x):=\frac{1}{12}\abs{x}^3,\quad{}x\in\RR.
\]
This ensures that $\vP$ and $G$ satisfy the conditions of
Theorem~\ref{t:NS-Hp} and $\norm{\FvP(x)}_2=\abs{x}^2$. As a result,
the associated interpolant is given by
\[
s_{f,X}(x):=\sum_{j=1}^N\frac{c_j}{12}\abs{x-x_j}^3+\beta_2x+\beta_1,
\quad{}x\in\RR.
\]
This is the same as the cubic spline interpolant (see
\cite[Chapter~6.1.5]{BerThAg04}).

According to \cite[Theorem~10.40]{Wen05}, we can check
that $\HP(\RR)$
has the $\Schwartz$-dense property. Therefore, Theorem~\ref{t:NS=Hp}
tells us that $\Native_G^2(\RR)\equiv\HP(\RR)$ and it follows that
the cubic spline is the optimal interpolant for all
functions in the generalized Sobolev space $\HP(\RR)$.

\end{example}

\begin{example}[Tension Splines]\label{e:tensionsplines}
Let $\sigma>0$ be a tension parameter and consider the vector
distributional operator $\vP:=( \ud^2/\ud x^2,\sigma \ud/\ud x )^T$ and
$L:=\vP^{\ast T}\vP=\ud^4/\ud x^4-\sigma^2 \ud^2/\ud x^2$. Then
\[
G(x):=-\frac{1}{2\sigma^3}\left(\exp(-\sigma\abs{x})+\sigma\abs{x}\right),
\quad{}x\in\RR,
\]
is a solution of Equation~(\ref{Green}). We can verify that $\vP$
and $G$ satisfy the conditions of Theorem~\ref{t:NS-Hp} and that
$\norm{\FvP(x)}_2=(\abs{x}^4+\sigma^2\abs{x}^2)^{1/2}=\Theta(\abs{x})$
as $\abs{x}\to 0$. So $G$ is a conditionally positive definite function of
order $1$. This yields the same interpolant as the tension spline
interpolant~\cite{BouMeh03,Sch66}.

According to the Sobolev
inequality~\cite{AdaFou03} and \cite[Theorem~10.40]{Wen05},
$\HP(\RR)$ has the $\Schwartz$-dense property which implies that
$\Native^1_G(\RR)\equiv\HP(\RR)$. Theorem~\ref{t:NS=Hp} and
\cite[Theorem~13.2]{Wen05} provide us with the same optimality
property as stated in~\cite{BouMeh03,Sch66}.

\end{example}

\begin{example}[Example~\ref{ex:sobolev-spline}]\label{e:Sobolevspline}

We use the theoretical results of Section~\ref{s:GenSob-NS} to verify the reproducing-kernel properties of Example~\ref{ex:sobolev-spline}.
Here we only give details for the Sobolev spline kernel as the other kernel can be treated in the same way.
Let $\vP:=( \ud^2/\ud x^2, \sqrt{2}\sigma \ud/\ud
x,\sigma^2 I )^T$ and $L:=\vP^{\ast T}\vP=(\sigma^2I-\ud^2/\ud x^2)^2$.
It is known that the Green function with respect to $L$ is the Mat\'ern function
\[
G(x):=\frac{1}{8\sigma^3}(1+\sigma\abs{x})\exp(-\sigma\abs{x}),\quad{}x\in\RR.
\]
Since $\vP$ and $G$ satisfy the conditions of Theorem~\ref{t:NS-Hp}
and $\norm{\FvP(x)}_2= \sigma^2+x^2 =\Theta(1)$
as $\abs{x}\to 0$, we can determine that $G$ is positive definite. It is easy to check that $\HP(\RR)\cong\Hil^2(\RR)$.
Since we have $\Hil^2(\RR)\subseteq\Leb_2(\RR)$, we can use Theorem~\ref{t:NS=Hp0} to check that $\Native^0_G(\RR)\equiv\HP(\RR)$. As discussed in Section~\ref{s:NS-HS},
the reproducing kernel and its reproducing-kernel Hilbert space have the forms $K(x,y)=G(x-y)$ and $\Hilbert_K(\RR)\equiv\HP(\RR)$.

\end{example}

%
%

\subsection{Two-Dimensional Cases}\label{s:2D3D}

\begin{example}[Thin Plate Splines]\label{e:TPS}
Let $\vP:=(\partial^2/\partial
x_1^2,\sqrt{2}\partial^2/\partial x_1\partial
x_2,\partial^2/\partial x_2^2)^T$ so that $L:=\vP^{\ast
T}\vP=\Delta^2$. It is well-known that the fundamental solution of
the Poisson equation on $\RR^2$ is given by $\vx \mapsto
\log\norm{\vx}_2$, i.e., $\Delta\log\norm{\vx}_2=-2\pi\delta$.
Therefore Equation~(\ref{Green}) is solved by
\begin{equation}\label{TPS}
G(\vx):=\frac{1}{8\pi}\norm{\vx}_2^2\log\norm{\vx}_2,\quad{}\vx\in\RR^2.
\end{equation}
Since $\vP$ and $G$ satisfy the conditions of Theorem~\ref{t:NS-Hp}
and $\norm{\FvP(\vx)}_2=\norm{\vx}_2^2$, $G$ is a conditionally
positive definite function of order $2$ and its related interpolant has the
form
\begin{equation}\label{TPSinterpolant}
s_{f,X}(\vx):=\sum_{j=1}^N c_j G(\vx-\vx_j)+\beta_3 x_2+\beta_2
x_1+\beta_1,\quad{}\vx=(x_1,x_2)\in\RR^2.
\end{equation}
Moreover, according to \cite[Theorem~10.40]{Wen05}, we can verify
that $\HP(\RR^2)$ has the $\Schwartz$-dense property. Therefore,
$\Native_G^2(\RR^2)\equiv\HP(\RR^2)$ by Theorem~\ref{t:NS=Hp}.
Equation (\ref{TPSinterpolant}) is known as the \emph{thin plate
spline} interpolant (see~\cite{Bou07,Duc77,KBU02ab}).

Finally, we consider the Duchon semi-norm mentioned in \cite{Duc77},
i.e.,
\[
\abs{f}_{D_2}^2:=\int_{\RR^2}\abs{ \frac{\partial^2f(\vx)}{\partial
x_1^2} }^2+2\abs{ \frac{\partial^2f(\vx)}{\partial x_1
\partial x_2} }^2+\abs{
\frac{\partial^2f(\vx)}{\partial x_2^2}
}^2\ud\vx,\quad{}f\in\Leb_1^{loc}(\RR^2)\cap\SI,
\]
and the Duchon semi-norm space
\[
\Hil_{D_2}(\RR^2):=\left\{
f\in\Real(\Leb_1^{loc}(\RR^2))\cap\SI:\abs{f}_{D_2}<\infty \right\}.
\]
If we define $\vP$ as above, then it is easy to check that
$\HP(\RR^2)\equiv\Hil_{D_2}(\RR^2)$. According to
\cite[Theorems~13.1~and~13.2]{Wen05} we can conclude that the Duchon
semi-norm space possesses the same optimality properties as those
listed in \cite{Duc77}.
\end{example}
%

The following example shows that the same Green function $G$ can
generate different generalized Sobolev spaces $\Hp$. Moreover, it
illustrates the fact that the native space $\NativeG$ may be a
proper subspace of $\Hp$.
\begin{example}[Modified Thin Plate Splines]\label{e:Laplacian}
Let $\vP:=\Delta$ and $L:=\vP^{\ast T}\vP=\Delta^2$. We find that
the thin plate spline (\ref{TPS}) is also the Green function with
respect to the operator $L$ defined here. The associated interpolant
is again of the form (\ref{TPSinterpolant}).

We now consider the Laplacian semi-norm
\[
\abs{f}_{\Delta}^2:=\int_{\RR^2}\abs{\Delta
f(\vx)}^2\ud\vx,\quad{}f\in\Leb_1^{loc}(\RR^2)\cap\SI,
\]
and the Laplacian semi-norm space
\[
\Hil_{\Delta}(\RR^2):=\left\{
f\in\Real(\Leb_1^{loc}(\RR^2))\cap\SI:\abs{f}_{\Delta}<\infty
\right\}.
\]

It is easy to verify that $\HP(\RR^2)\equiv\Hil_{\Delta}(\RR^2)$.
However, it is known that $\Hil_{D_2}(\RR^2)$ is a proper subspace
of $\Hil_{\Delta}(\RR^2)$ since $q\in\Hil_{\Delta}(\RR^2)$ but
$q\not\in\Hil_{D_2}$ where $q(\vx):=x_1x_2$. Therefore, due to
Example~\ref{e:TPS}, we conclude that
\[
\Native_G^2(\RR^2)\equiv\Hil_{D_2}(\RR^2)\varsubsetneqq\Hil_{\Delta}(\RR^2)\equiv\HP(\RR^2).
\]

Instead of working with the polynomial space $\pi_1(\RR^2)$ which is
used to define $\Native_G^2(\RR^2)$, we can construct a new native
space $\Native_G^{\mathscr{P}}(\RR^2)$ for $G$ by using another
finite-dimensional space $\mathscr{P}$ of
$\Real(\Cont^2(\RR^2))\cap\SI$ such that
$\Native_G^{\mathscr{P}}(\RR^2)$ may be equal to the other subspace
of $\HP(\RR^2)$. First we can verify that the finite-dimensional
space $\mathscr{P}:=\textrm{span} \left\{ \pi_1(\RR^2)\cup\{ q \}
\right\}$ is a subspace of the null space of $\HP(\RR^2)$. Since
$\pi_1(\RR^2)\subset\mathscr{P}$ and $G$ is a conditionally positive
definite function of order 2, we know that $G$ is also conditionally
positive definite with respect to $\mathscr{P}$. Hence, the new
native space $\Native_G^{\mathscr{P}}(\RR^2)$ with respect to $G$
and $\mathscr{P}$ is well-defined (see \cite[Chapter~10.3]{Wen05}).
We can further check that $\Native_G^{\mathscr{P}}(\RR^2)$ is a
subspace of $\HP(\RR^2)$ but it is larger than $\Native_G^2(\RR^2)$,
i.e.,
$\Native_G^2(\RR^2)\subsetneqq\Native_G^{\mathscr{P}}(\RR^2)\subseteq\HP(\RR^2)$.

So we can obtain a modification of the thin plate spline interpolant
based on $\mathscr{P}$:
\[
s^{\mathscr{P}}_{f,X}(\vx):=\sum_{j=1}^N c_j G(\vx-\vx_j)+\beta_4
x_1 x_2+\beta_3 x_2+\beta_2
x_1+\beta_1,\quad{}\vx=(x_1,x_2)\in\RR^2.
\]

\end{example}

\begin{CONJECTURE}
Motivated by Example~\ref{e:Laplacian} we audaciously \emph{guess}
the following extension of the theorems in
Section~\ref{s:GenSob-NS}: Let $\mathbf{P}$ and $G$ satisfy the
conditions of Theorem~\ref{t:NS-Hp}. If the subspace $\mathscr{P}$
of the null space of $\Hp$ is a finite-dimensional subspace and
$\pim\subseteq\mathscr{P}$, then the new native space
$\Native_G^{\mathscr{P}}(\RR^2)$ with respect to $G$ and
$\mathscr{P}$ is a subspace of $\Hp$.
\end{CONJECTURE}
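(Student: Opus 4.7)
The plan is to extend Theorem~\ref{t:NS-Hp} from $\NativeG$ to the enlarged native space $\Native_G^{\mathscr{P}}$. I would begin by verifying that $G$ is conditionally positive definite with respect to $\mathscr{P}$: since $\pim\subseteq\mathscr{P}$, any real sequence $\{c_j\}_{j=1}^N$ satisfying $\sum_j c_j p(\vx_j)=0$ for every $p\in\mathscr{P}$ also satisfies this for every $p\in\pim$, so the positivity of $\sum_{j,k}c_jc_kG(\vx_j-\vx_k)$ carries over from the order-$m$ conditional positive definiteness granted by Theorem~\ref{t:NS-Hp}. Wendland's construction in \cite[Chapter~10.3]{Wen05} then yields a well-defined semi-inner-product space $\Native_G^{\mathscr{P}}$ whose null space is exactly $\mathscr{P}$, with the decomposition $\Native_G^{\mathscr{P}}=\mathscr{P}\oplus\mathcal{N}$, where $\mathcal{N}$ is the abstract completion of
\[
F_G^{\mathscr{P}}:=\Big\{\textstyle\sum_{j}c_jG(\cdot-\vx_j):\sum_{j}c_jp(\vx_j)=0\text{ for every }p\in\mathscr{P}\Big\}
\]
under the seminorm $\big|\sum_j c_jG(\cdot-\vx_j)\big|^2=\sum_{j,k}c_jc_kG(\vx_j-\vx_k)$.

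The polynomial component is disposed of immediately: by hypothesis $\mathscr{P}$ lies in the null space of $\Hp$, so $\mathscr{P}\subseteq\Hp$ with $|p|_\Hp=0$ for every $p\in\mathscr{P}$. For the non-polynomial component, the key observation is that each generator $f_\lambda\in F_G^{\mathscr{P}}$ automatically satisfies the weaker $\pim$-moment condition, hence $f_\lambda\in\NativeG$. Theorem~\ref{t:NS-Hp} therefore places $f_\lambda$ in $\Hp$ and delivers
\[
|f_\lambda|_{\Hp}=|f_\lambda|_{\NativeG}=|f_\lambda|_{\Native_G^{\mathscr{P}}},
\]
since all three seminorms evaluate to the same bilinear form on $F_G^{\mathscr{P}}$.

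Now take any $\tilde f\in\mathcal{N}$, represented by a Cauchy sequence $\{f_n\}\subseteq F_G^{\mathscr{P}}$. The identity above shows that $\{f_n\}$ is Cauchy in the $\Hp$-seminorm, and componentwise completeness of $\Ltwo$ for the sequences $\{P_j f_n\}_n$ yields a limit $F\in\Hp$ with $|F|_\Hp=|\tilde f|_{\Native_G^{\mathscr{P}}}$. It then remains to match the abstract limit $\tilde f$ with the concrete limit $F$ modulo $\mathscr{P}$ (which itself lies in the null space of $\Hp$); once this is done, combining with the polynomial part yields the desired inclusion $\Native_G^{\mathscr{P}}\subseteq\Hp$.

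The main obstacle is precisely this last identification step. In Theorem~\ref{t:NS-Hp} the issue does not arise because the generators of $\NativeG$ sit concretely inside $\Hp$, but here one must reconcile Wendland's abstract completion --- which is built via point evaluations at a $\mathscr{P}$-unisolvent set $\{\vxi_1,\ldots,\vxi_Q\}$ and a Lagrange basis $\{q_k\}_{k=1}^Q$ of $\mathscr{P}$ --- with the functional limit in $\Hp$. A promising route is to prove an analogue of Lemma~\ref{l:flambda} adapted to the enlarged unisolvent set, using translation- and complex-adjoint-invariance of $L=\vP^{\ast T}\vP$ to obtain a reproducing identity that pins down $F$ modulo $\mathscr{P}$. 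A subtlety worth flagging is that Lemma~\ref{l:flambda} currently invokes the $\Schwartz$-dense property, which is not assumed in the conjecture; circumventing this may call for a direct mollification argument commuting $\vP$ with a smooth approximate identity, or equivalently a Fourier-side argument refining the characterization in Theorem~\ref{t:NS-FT} to account for the enlarged polynomial space $\mathscr{P}$.
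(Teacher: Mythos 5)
The statement you are trying to prove is not proved in the paper at all: it is explicitly labelled a conjecture (``we audaciously guess''), so there is no paper proof to compare against, and your proposal should be judged on whether it closes the open question. It does not. The part of your argument up to the level of the generators is sound and essentially already contained in Example~\ref{e:Laplacian} and Theorem~\ref{t:NS-Hp}: $G$ is conditionally positive definite with respect to $\mathscr{P}$ because the $\mathscr{P}$-moment conditions are more restrictive than the $\pim$-ones, $\mathscr{P}$ sits in $\Hp$ with vanishing seminorm by hypothesis, and every finite combination $f_\lambda=\sum_j c_j G(\cdot-\vx_j)$ with $\mathscr{P}$-moment conditions lies in $\NativeG\subseteq\Hp$ with matching seminorms. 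The genuine gap is exactly the step you flag and then leave open: passing from the generators to a general element of $\Native_G^{\mathscr{P}}(\RR^2)$. Cauchyness of $\{f_n\}$ in the seminorm only gives $\Ltwo$-limits $g_j$ of $\{P_j f_n\}$; to conclude that the native-space limit $\tilde f$ itself satisfies $P_j\tilde f=g_j$ (hence $\tilde f\in\Hp$) you need convergence of $f_n$ to $\tilde f$ in a sense strong enough to pass to the limit in $\langle f_n,P_j^{\ast}\gamma\rangle$ for $\gamma\in\Schwartz$, and the seminorm gives no such control, since its null space --- the null space of $\Hp$ --- is not assumed finite-dimensional and is not controlled by the construction.

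Your proposed remedies do not repair this. An analogue of Lemma~\ref{l:flambda} is built on the $\Schwartz$-dense property, but that hypothesis cannot hold in any non-trivial instance of the conjecture: if $\Hp$ had the $\Schwartz$-dense property, Theorem~\ref{t:NS=Hp} would give $\NativeG\equiv\Hp$, forcing the null space of $\Hp$ to coincide with $\pim$ and hence $\mathscr{P}=\pim$, i.e.\ nothing beyond Theorem~\ref{t:NS-Hp}. The motivating case, Example~\ref{e:Laplacian} with $\vP=\Delta$, is precisely one where $\NativeG\subsetneqq\Hp$, so the density machinery is unavailable there. The alternative Fourier-side route would require an analogue of Theorem~\ref{t:NS-FT} characterizing $\Native_G^{\mathscr{P}}$ for a general finite-dimensional $\mathscr{P}\supseteq\pim$, which neither the paper nor \cite{Wen05} provides, and the mollification idea is only named, not carried out. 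So the proposal is an honest and reasonable outline, but its crucial identification step is missing, and the statement remains, as in the paper, an open conjecture rather than a theorem.
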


%
%

\subsection{d-Dimensional Cases}
\begin{example}[Polyharmonic Splines]\label{e:polyharmonic}
This is a generalization of the earlier Examples~\ref{e:spline} and
\ref{e:TPS}. Let $\vP:=(\partial^m/\partial x_1^m, \cdots,
\left(m!/\alpha!\right)^{1/2}D^{\alpha}, \cdots,
\partial^m/\partial x_d^m )^T$ consisting of all
$\left(m!/\alpha!\right)^{1/2}D^{\alpha}$ with
$\abs{\alpha}=m>d/2$. We further denote $L:=\vP^{\ast
T}\vP=(-1)^m\Delta^m$. Then the \emph{polyharmonic spline on $\Rd$}
is the solution of Equation~(\ref{Green}) (see
\cite[Chapter~6.1.5]{BerThAg04}), i.e.,
\[
G(\vx):=
\begin{cases}
\frac{\Gamma(d/2-m)}{2^{2m}\pi^{d/2}(m-1)!}\norm{\vx}_2^{2m-d} & \textrm{for $d$ odd,}\\
\frac{(-1)^{m+d/2-1}}{2^{2m-1}\pi^{d/2}(m-1)!(m-d/2)!}\norm{\vx}_2^{2m-d}\log\norm{\vx}_2
& \text{for $d$ even.}
\end{cases}
\]
We can also check that $\vP$ and $G$ satisfy the conditions of
Theorem~\ref{t:NS-Hp} and that $\norm{\FvP(\vx)}_2=\norm{\vx}_2^m$.
Therefore $G$ is a conditionally positive definite function of order $m$.
Furthermore, according to \cite[Theorem~10.40]{Wen05}, we can verify
that $\Hp$ has the $\Schwartz$-dense property. Therefore,
$\NativeG\equiv\Hp$ by Theorem~\ref{t:NS=Hp}.

We now consider the Beppo-Levi space of order $m$ on $\Rd$, i.e.,
\[
BL_m(\Rd):=\left\{ f\in\RealLloc: D^{\alpha}f\in\Ltwo\text{ for all
}\abs{\alpha}=m \right\}
\]
equipped with the semi-inner product
\[
(f,g)_{BL_m(\Rd)}:=\sum_{\abs{\alpha}=m}\frac{m!}{\alpha!}\int_{\Rd}D^{\alpha}f(\vx)\overline{D^{\alpha}g(\vx)}\ud\vx,
\quad{}f,g\in BL_m(\Rd).
\]
According to~\cite{LigWay99}, we
know that $BL_m(\Rd)\subseteq\RealLloc\cap\SI$ whenever $m>d/2$.
Hence $\Hp\equiv BL_m(\Rd)$.

By the way, it is well-known that $G$ is also conditionally
positive definite of order $l:=m-\lceil d/2\rceil+1$
(see~\cite[Corollary~8.8]{Wen05}). However, the native space
$\Native_G^l(\Rd)$ induced by $G$ and $\pi_{l-1}(\Rd)$ is a proper
subspace of $\NativeG$ when $d>1$. Therefore
\[
\Native_G^l(\Rd)\subsetneqq\NativeG\equiv\Hp\equiv
BL_m(\Rd),\quad{}d>1.
\]
\end{example}
%
\begin{remark}
If we have a vector distributional operator
$\vP:=\left(P_1,\cdots,P_n\right)^T$ whose distributional
Fourier transform satisfies $\vx\mapsto\norm{\FvP(\vx)}_2^2\in\pi_{2m}(\Rd)$
and
\[
\left\{a_{\alpha}D^{\alpha}:\abs{\alpha}=m,~\alpha\in\NN_0^d\right\}
\subseteq\left\{P_j:j=1,\ldots,n\right\},\quad{}\text{where
}a_{\alpha}\neq0\text{ and }m>d/2,
\]
then $\Hp\subseteq BL_m(\Rd)$. According to the Sobolev
inequality~\cite{AdaFou03}, there is a positive constant $C$ such
that $\norm{f}_{\Hp}^2\leqslant C\norm{f}_{BL_m(\Rd)}^2$ for each
$f\in\Hp$. This implies that this generalized Sobolev space $\Hp$
also has the $\Schwartz$-dense property.
\end{remark}

\begin{example}[Sobolev Splines, {\cite[Example~3]{Ye10}}]\label{e:Matern}
This is a generalization of Example~\ref{e:Sobolevspline}. Let
$\vP:=( \vQ_0^T,\cdots,\vQ_n^T )^T$, where
\[
\vQ_j:=
\begin{cases}
\left( \frac{n!\sigma^{2n-2j}}{j!(n-j)!} \right)^{1/2}\Delta^k & \text{when $j=2k$,}\\
\left( \frac{n!\sigma^{2n-2j}}{j!(n-j)!} \right)^{1/2}\Delta^k\nabla
& \text{when $j=2k+1$},
\end{cases}
\qquad k\in\NN_0,\ j=0,1,\ldots,n,\ n>d/2.
\]
Here we use $\Delta^0:=I$. We further define $L:=\vP^{\ast
T}\vP=(\sigma^2I-\Delta)^n$.

The \emph{Sobolev spline} (or Mat\'ern function) is known to be the
Green function with respect to $L$ (see
\cite[Chapter~6.1.6]{BerThAg04} and \cite[Chapter~13.2]{Fas07}), i.e.,
\[
G(\vx):=\frac{2^{1-n-d/2}}{\pi^{d/2}\Gamma(n)\sigma^{2n-d}}\left(\sigma\norm{\vx}_2\right)^{n-d/2}K_{d/2-n}
\left(\sigma\norm{\vx}_2\right),
\quad{}\vx\in\Rd,
\]
where $z\mapsto K_{\nu}(z)$ is the \emph{modified Bessel function of
the second kind of order $\nu$}. Since $\vP$ and $G$ satisfy the
conditions of Theorem~\ref{t:NS-Hp} and
$\norm{\FvP(\vx)}_2=\Theta(1)$ as $\norm{\vx}_2\to 0$, $G$ is
positive definite and the associated interpolant $s_{f,X}$ is the
same as the Sobolev spline (or Mat\'ern) interpolant.

Combining \cite[Example~3]{Ye10} and Theorem~\ref{t:NS=Hp0}, we
can determine that
\[
\Native_G^0(\Rd)\equiv\Hp\cong\Hil^n(\Rd).
\]
Moreover, this shows that the classical Sobolev space $\Hil^n(\Rd)$
becomes a reproducing-kernel Hilbert space with $\Hp$-inner product
and its reproducing kernel is given by $K(\vx,\vy):=G(\vx-\vy)$.

\end{example}

In the following example we are not able to establish that the
operator $\vP$ satisfies the conditions of Theorem~\ref{t:NS-Hp} and
so part of the connection to the theory developed in this paper is
lost. We therefore use the symbol $\Phi$ to denote the kernel
instead of $G$.
\begin{example}[Gaussians, {\cite[Example~4]{Ye10}}]\label{e:Gaussian}
The Gaussian kernel $K(\vx,\vy):=\Phi(\vx-\vy)$ derived by the
Gaussian function $\Phi$ is very important and popular in the
current research fields of scattered data approximation and machine
learning. Therefore knowledge of the
native space of the Gaussian function or the reproducing-kernel
Hilbert space of the Gaussian kernel is of significant interest. In this example we will show
that the native space of the Gaussian function is isometrically equivalent to a
generalized Sobolev space.

We firstly consider the \emph{Gaussian} function
\[
\Phi(\vx):=\frac{\sigma^d}{\pi^{d/2}}\exp(-\sigma^2\norm{\vx}_2^2),\quad{}\vx\in\Rd,\quad{}\sigma>0.
\]
We know that $\Phi$ is a positive definite function and its native
space $\Native^0_{\Phi}(\Rd)$ is a reproducing kernel Hilbert space
(see \cite[Chapter~4]{Fas07}).

Let $\vP:=( \vQ_0^T,\cdots,\vQ_n^T,\cdots )^T$, where
\[
\vQ_n:=\begin{cases}
\left( \frac{1}{n!4^n\sigma^{2n}} \right)^{1/2}\Delta^k & \text{when $n=2k$,}\\
\left( \frac{1}{n!4^n\sigma^{2n}} \right)^{1/2}\Delta^k\nabla &
\text{when $n=2k+1$,}
\end{cases}
\qquad k\in\NN_0.
\]
Here we again use $\Delta^0:=I$. Since the differential operators
are just special cases of distributional operators, the
generalized Sobolev space $\Hp$ defined by $\vP$ is the same as that derived in
\cite[Example~4]{Ye10}. Therefore we can combine Theorem~\ref{t:NS-Hp}, \ref{t:NS=Hp0} and the techniques of the proof for
\cite[Example~4]{Ye10} to obtain that
\[
\Native_{\Phi}^0(\Rd)\equiv\Hp.
\]

Moreover, it is easy to verify that $\Hp\subseteq\Hil^n(\Rd)$ for
each $n\in\NN$. According to the Sobolev embedding
theorem~\cite{AdaFou03}, we also have
$\Hp\subseteq\Real(\Cont_b^{\infty}(\Rd))$. However, $\Hp$ does not contain polynomials. If $f\in\Real(\Cont_b^{\infty}(\Rd))$ and there
is a positive constant $C$ such that
$\norm{D^{\alpha}f}_{\Linfty}\leqslant C^{\abs{\alpha}}$ for each
$\alpha\in\NN_0^d$, then $f\in\Hp$ which implies that
$f\in\Native^0_{\Phi}(\Rd)$.

If we replace the test functions space to be $\mathscr{D}$, then we
can further think of the Gaussian function $\Phi$ is a (full-space)
Green function of $L:=\exp(-\frac{1}{4\sigma^2}\Delta)$, i.e.,
$L\Phi=\delta_0$ and $\Phi,\delta_0\in\mathscr{D}'$, where
$\mathscr{D}=\Cont^{\infty}_0(\Rd)$ and its dual space
$\mathscr{D}'$ are defined in~\cite[Chapter~1.5]{AdaFou03}.

\end{example}


\section{Extensions and Future Works}\label{s:closing}

In this paper we have presented a unified theory for the generation
of conditionally positive definite functions of order $m$ as
(full-space) Green functions with respect to a distributional
operator $L:=\vP^{\ast T}\vP$ with an appropriate vector
distributional operator $\vP$. These even Green functions
$G\in\RealContinue\cap\SI$ can be used as basic functions of a
translation invariant meshfree kernel-based approximation method of
the form (\ref{interpolant})-(\ref{interpolation_conditions}). Our
analysis is limited to this translation invariant setting which does
not address the fully general situation with kernels of the form
$K(\vx,\vy)$, but is more general than the radial setting.

In Section~\ref{s:examples} we were able to show that many different
types of ``splines'' and radial basis functions can be treated with
our Green function framework. Thus, reproducing kernel Hilbert space
methods can be viewed as a natural generalization of univariate
splines (including such variations as tension splines). Other forms
of univariate splines such as smoothing splines or regression
splines can be covered using a related least squares framework, and
multivariate generalizations of these methods are widely used in
statistics and machine learning.

We only consider real-valued functions as candidates for the
generalized Sobolev spaces and Green functions in this paper, but
all the conclusions and the theorems can be extended to
complex-valued functions in a way similarly to~\cite{Wen05}. $\Hp$
may not be complete even though we extend it to complex-valued
functions. However, its completion is isometrically embedded into the tempered
distribution space $\Schwartz'$ and has the explicit form
\[
\overline{\Hp}\equiv \left\{ T\in\Schwartz':P_j
T\in\Ltwo,~j=1,\ldots,n \right\},\quad{}\textrm{if
}\vP=\left(P_1,\cdots,P_n\right)^T.
\]

The vector distributional operator $\vP$ can be further constructed
by \emph{pseudo-differential operators}. Therefore
their generalized Sobolev spaces $\Hp$ are isometrically equivalent to the
Beppo-Levi type spaces $X_{\tau}^m(\Rd)$. The paper~\cite{Bou07} shows that the radial basis function under tension may be associated
to a pseudo-differential operator in a Beppo-Levi space type. For example, if
$\vP:=(\omega_{\tau}\mathcal{F}\partial^m/\partial x_1^m, \cdots,
\left(m!/\alpha!\right)^{1/2}\omega_{\tau} \mathcal{F} D^{\alpha}, \cdots, \omega_{\tau} \mathcal{F} \partial^m/\partial
x_d^m )^T$, then
\[
\Hp\equiv X_{\tau}^m(\Rd):=\left\{f\in\RealLloc\cap\SI:\omega_{\tau}
\widehat{D^{\alpha}f}\in\Ltwo,\abs{\alpha}= m,\alpha\in\NN_0^d\right\},
\]
where $\mathcal{F}$ is a distributional Fourier transform map and $\omega_{\tau}(\vx):=\norm{\vx}_2^{\tau}$,
$0\leqslant\tau<1$. However, $\vP$ may not satisfy the condition of
Theorem~\ref{t:NS-Hp}. We have reserved these situations for our
future research.

Unfortunately, it is sometimes difficult for us to solve a Green
function matching the conditions of Theorem~\ref{t:NS-Hp} even if the
vector distributional operator $\vP$ satisfies the conditions of
Theorem~\ref{t:NS-Hp}. However, there is usually an even Green
function $G\in\Real(\Continuezero)\cap\SI$. This means that the
Green function merely has a \emph{singular point} at the origin.
According to our numerical tests of some cases, we find that this
kind of Green function can still play the role of a basic function for
the construction of a multivariate interpolant $s_{f,X}$ via
(\ref{interpolant})-(\ref{interpolation_conditions}) after some
techniques to remove the singularity. One of the numerical tests is
a two-dimensional example as below. Let $\vP:=\big(
\Delta,\sigma\nabla^T \big)^T$ with $\sigma>0$ and the Green
function with respect to $L:=\vP^{\ast
T}\vP=\Delta^2-\sigma^2\Delta$ be given by
\[
G(\vx):=-\frac{1}{2\pi\sigma^2}\left(
K_0\left(\sigma\norm{\vx}_2\right)+\log\left(\sigma\norm{\vx}_2\right)
\right),\quad{}\vx\in\RR^2,
\]
where $z\mapsto K_{\nu}(z)$ is the modified Bessel function of the
second kind of order $\nu$. We can use a transformation to
remove the singularity of $G$ as follows:
\[
G_r(\vx):=-\frac{1}{2\pi\sigma^2}\left(
K_0\left(\sigma\norm{\vx}_2+r\right)+\log\left(\sigma\norm{\vx}_2+r\right)
\right),\quad{}\vx\in\RR^2,~r>0.
\]
We guess that the interpolant via this modified Green function may
be used to approximate functions belonging to the related
generalized Sobolev space.

We merely consider the Lebesgue measure here. However, we can further
generalize our results to other measure spaces
$\left(\Omega,\mathcal{B}_{\Omega},\mu\right)$, where
$\Omega\subseteq\Rd$ and $\mathcal{B}_{\Omega}$ is the Borel set of
$\Omega$. We suppose that the bijective map
\[
\A:\left(\Omega,\mathcal{B}_{\Omega},\mu\right)\rightarrow(\Rd,\mathcal{B}_{\Rd})
\]
is differentiable at every point of $\Omega$ such that
\[
\ud\mu(\vx)=\abs{\det\left(J_{\A}(\vx)\right)}\ud\vx,\quad{}\text{where
$J_{\A}(\vx)$ is the Jacobian matrix of $\A$ at $\vx$}.
\]
According to the Radon-Nikodym Theorem~\cite{BerThAg04} it is not
difficult to gain similar conclusions when we transform the
generalized Sobolev space to be
\[
\HP^{\mu}(\Omega):=\left\{f_{\A}:=f\circ\A:f\in\Hp\right\},
\]
with the semi-inner product
\[
(f_{\A},g_{\A})_{\HP^{\mu}(\Omega)}:=\sum_{n=1}^{\infty}
\int_{\Omega}P_nf(\A(\vx))\overline{P_ng(\A(\vx))}\ud\mu(\vx),\quad{}f_{\A},g_{\A}\in\HP^{\mu}(\Omega).
\]

Finally, we do not specify any boundary conditions for the
(full-space) Green functions. Thus we may have many choices of the
Green functions with respect to the same distributional operator
$L$. In our future work we will apply a vector distributional
operator $\vP:=(P_1,\cdots,P_{n_p})^T$ and a vector boundary
operator $\vB:=(B_1,\cdots,B_{n_b})^T$ on a bounded domain $\Omega$ to
construct a reproducing kernel and its related reproducing-kernel
Hilbert space (see~\cite{FasYe10b}). We further hope to use the
distributional operator $L$ to approximate the eigenvalues and
eigenfunctions of the kernel function with the goal of obtaining
fast numerical methods to solve the interpolating
systems~(\ref{interpolant})-(\ref{interpolation_conditions}) similar
as fast multipole methods in \cite[Chapter 15]{Wen05}.


\bigskip

\end{document}